\title{Norms on the cohomology \\ of hyperbolic 3-manifolds}
\author{Jeffrey F. Brock}
\address{ Dept.~of Math.\\ 
Brown University \\ 
Box 1917 \\ 
Providence, RI 02912, USA
}
\email{jeff\_brock@brown.edu}
\urladdr{http://www.math.brown.edu/~brock/}
\author{Nathan M. Dunfield}
\address{ Dept.~of Math., MC-382 \\
          University of Illinois \\
          1409 W. Green St. \\
          Urbana, IL 61801, USA
}
\email{nathan@dunfield.info}
\urladdr{http://dunfield.info}
\subjclass[2000]{} 
\keywords{}
\newcommand{\thnorm}[1]{\norm{#1}_{\mathit{Th}}}
\newcommand{\dualthnorm}[1]{\norm{#1}^*_{\mathit{Th}}}
\newcommand{\harnorm}[1]{\norm{#1}_{L^2}}
\newcommand{\hnorm}[1]{\norm{#1}_{h}}
\newcommand{\harnormB}[1]{\norm{#1}_{L^2(B)}}
\newcommand{\onenorm}[1]{\norm{#1}_{L^1}}
\newcommand{\areanorm}[1]{\norm{#1}_{\mathit{LA}}}
\newcommand{\infnorm}[1]{\norm{#1}_{L^\infty}}
\DeclareMathOperator{\interior}{int}
\DeclareMathOperator{\image}{image}
\DeclareMathOperator{\length}{length}
\newcommand{\Mthin}{M_{\mathit{thin}}}
\newcommand{\Mthick}{M_{\mathit{thick}}}
\newcommand{\dVol}{\mathit{dVol}}
\newcommand{\dA}{\mathit{dA}}
\newcommand{\dr}{\mathit{dr}}
\newcommand{\ds}{\mathit{ds}}
\newcommand{\df}{\mathit{df}}
\newcommand{\dz}{\mathit{dz}}
\newcommand{\dt}{\mathit{dt}}
\newcommand{\dg}{\mathit{dg}}
\renewcommand{\dh}{\mathit{dh}}
\newcommand{\dtheta}{\mathit{d}\theta}
\newcommand{\dalphabar}{\mathit{d}\alphabar}
\newcommand{\dphi}{\mathit{d}\phi}
\newcommand{\twoFone}[4]{{}_2 F_1 \left(#1, #2; \, #3;\, #4 \right)}
\newcommand{\thinM}{M_{\mathit{thin}}}
\newcommand{\thickM}{M_{\mathit{thick}}}
\newcommand{\Ylm}{Y_{\ell m}}
\newcommand{\psilm}{\Psi_{\ell m}}
\newcommand{\psil}{\psi_\ell}
\newcommand{\alm}{a_{\ell m}}
\newcommand{\omegalm}{\omega_{\ell m}}
\newcommand{\drhat}{\kernhat{1}{\dr}{-0.5}}
\newcommand{\dphihat}{\kernhat{1}{\dphi}{-0.5}}
\newcommand{\dthetahat}{\kernhat{1.5}{\dtheta}{-0.5}}
\newcommand{\starbar}{\overline{*}}
\newcommand{\nuformula}[1]{6 \pi \left( #1 + 2 #1 \csch^2(#1) - \coth(#1)\left(#1^2 \csch^2(#1) + 1\right)\right)}
\begin{document}

\begin{abstract} 
  We study the relationship between two norms on the first cohomology
  of a hyperbolic 3-manifold: the purely topological Thurston norm and
  the more geometric harmonic norm.  Refining recent results of
  Bergeron, \Sengun, and Venkatesh as well as older work of
  Kronheimer and Mrowka, we show that these norms are
  roughly proportional with explicit constants depending only on the
  volume and injectivity radius of the hyperbolic 3-manifold itself.
  Moreover, we give families of examples showing that some (but not
  all) qualitative aspects of our estimates are sharp.  Finally, we
  exhibit closed hyperbolic \3-manifolds where the Thurston norm grows
  exponentially in terms of the volume and yet there is a uniform
  lower bound on the injectivity radius.
\end{abstract}
\maketitle

\section{Introduction}

Suppose $M$ is a closed oriented hyperbolic \3-manifold.  Our main
goal here is to understand the relationship between two norms on
$H^1(M; \R)$: the purely topological Thurston norm and the more
geometric harmonic norm. Precise definitions of these norms are given
in Section~\ref{sec:norms}, but for now here is an informal sketch.  

The \emph{Thurston norm} $\thnorm{\phi}$ of an integral class
$\phi \in H^1(M; \R)$ measures the topological complexity of the
simplest surface dual to $\phi$; it extends to all of $H^1(M;\R)$
where its unit ball is a finite-sided polytope with rational vertices.
It makes sense for any \3-manifold, though unlike in the hyperbolic
case where it is nondegenerate, there can be nontrivial $\phi$ of norm
$0$.  While it was introduced by Thurston in the 1970s
\cite{Thurston1986}, its roots go back to the early days of topology,
to questions about the genus of knots in the \3-sphere, and it has
been extensively studied in many contexts.

Turning to geometry, as with any Riemannian manifold, the hyperbolic
metric on $M$ gives a norm on $H^1(M; \R)$ which appears in the
proof of the Hodge theorem. Specifically, if we identify $H^1(M; \R)$
with the space of harmonic \1-forms, then the \emph{harmonic norm}
$\harnorm{\cdotspaced}$ is the one associated with the usual inner
product on the level of forms:
\[
\pair{\alpha, \beta} = \int_M \alpha \wedge *\beta
\]
As it comes from a positive-definite inner product, the unit ball of 
$\harnorm{\cdotspaced}$ is a nice smooth ellipsoid.  The harmonic norm
appears, for example, in the Cheeger-M\"uller formula for the
Ray-Singer analytic torsion of $M$ \cite{Cheeger1979, Muller1978}.

By Mostow rigidity, the hyperbolic metric on $M$ is unique, and so a
posteriori the harmonic norm depends solely on the underlying topology
of $M$.  It is thus very natural to ask how these two norms are
related.  Kronheimer and Mrowka seem to have been the first to study
this question in the more general context of arbitrary Riemannian
metrics on $M$; specifically, using deep results from gauge theory,
they characterized the Thurston norm as the infimum (over all
possible metrics) of certain scaled harmonic norms
\cite{KronheimerMrowka1997b}.  Their results have specific
consequences for any given metric including the hyperbolic one; see
Section~\ref{sec:gauge} for a complete discussion.  More recently, Bergeron,
\Sengun, and Venkatesh \cite{BergeronSengunVenkatesh2016} examined
the relationship between these two norms in the case of the hyperbolic
metric.   There,
motivated by questions about torsion growth in the homology of
arithmetic groups, they proved the following beautiful result:
\begin{theorem}[{\cite[4.1]{BergeronSengunVenkatesh2016}}]\label{prop:BSV}
  Suppose $M_0$ is a closed orientable hyperbolic \3-manifold.  There
  exist constants $C_1$ and $C_2$, depending on $M_0$, so that for
  every finite cover $M$ of $M_0$ one has
  \begin{equation}\label{eq:norms}
  \frac{C_1}{\vol(M)} \thnorm{\cdotspaced} \leq \harnorm{\cdotspaced}
  \leq C_2 \thnorm{\cdotspaced} \mtext{on $H^1(M ; \R)$.}
  \end{equation}
\end{theorem}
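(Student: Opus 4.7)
The plan is to prove the two inequalities separately, with rather different techniques, and then extend from integral classes to all of $H^1(M;\R)$ by homogeneity and density.

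For the \emph{upper bound} $\harnorm{\cdotspaced} \leq C_2 \thnorm{\cdotspaced}$, I fix an integral class $\phi$ with harmonic representative $\alpha$ and choose a Thurston-norm-minimizing surface $\Sigma$ dual to $\phi$. Such a $\Sigma$ can be taken incompressible and realized as a pleated surface with $\mathrm{area}(\Sigma) = 2\pi\thnorm{\phi}$, by classical results of Thurston and Bonahon. Harmonicity of $\alpha$ forces $d{*}\alpha = 0$, so Poincar\'e duality applied to the closed forms $\alpha$ and $*\alpha$ yields
\[
\harnorm{\alpha}^2 = \int_M \alpha \wedge *\alpha = \int_\Sigma *\alpha \leq \mathrm{area}(\Sigma) \cdot \infnorm{\alpha}.
\]
A standard mean-value elliptic estimate on balls of radius $\mathrm{inj}(M)$ bounds $\infnorm{\alpha} \leq C\, \harnorm{\alpha}$, with $C$ depending only on a lower bound for the injectivity radius. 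Since $\mathrm{inj}(M) \geq \mathrm{inj}(M_0)$ for every finite cover $M \to M_0$, this constant is uniform, and dividing by $\harnorm{\alpha}$ gives $\harnorm{\alpha} \leq 2\pi C\, \thnorm{\phi}$.

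For the \emph{lower bound} $\harnorm{\cdotspaced} \geq (C_1/\vol(M))\, \thnorm{\cdotspaced}$, integrality of $[\alpha]$ lets me write $\alpha = df$ for some smooth $f: M \to S^1$. The coarea formula together with Cauchy--Schwarz gives
\[
\int_0^1 \mathrm{area}(f^{-1}(t))\, dt = \onenorm{\alpha} \leq \sqrt{\vol(M)}\, \harnorm{\alpha},
\]
so some regular value $t^*$ produces an embedded surface $\Sigma^* = f^{-1}(t^*)$ dual to $\phi$ with $\mathrm{area}(\Sigma^*) \leq \sqrt{\vol(M)}\, \harnorm{\alpha}$. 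A packing argument using the bounded local geometry---covering $\Sigma^*$ by balls of radius $\sim\mathrm{inj}(M_0)/2$ and controlling the combinatorics of the pieces of $\Sigma^*$ inside each ball---should yield an estimate $\chi_-(\Sigma^*) \leq K\cdot \mathrm{area}(\Sigma^*)$ with $K$ depending only on $\mathrm{inj}(M_0)$. Combining gives $\thnorm{\phi} \leq \chi_-(\Sigma^*) \leq K\sqrt{\vol(M)}\, \harnorm{\alpha}$, which is in fact stronger than the stated $\vol(M)$ bound.

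The main obstacle is exactly this last packing step: converting area of a generic level set into topological complexity. One cannot simply pass to an area-minimizing surface in the homology class and apply Gauss--Bonnet, since for minimal surfaces in a hyperbolic 3-manifold one has $\mathrm{area} \leq 2\pi \chi_-$, the opposite of what is required. So the argument must exploit bounded local geometry directly, and the uniform injectivity-radius lower bound inherited from $M_0$ is precisely the input that forces both $C_1$ and $C_2$ to depend only on $M_0$.
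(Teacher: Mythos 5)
Your upper-bound argument is essentially the same as the paper's proof of its sharper Theorem~\ref{thm:main}: choose a dual surface of area at most $2\pi\thnorm{\phi}$ (you use a pleated surface, the paper uses a least-area incompressible one via Lemma~\ref{lem:stable}, but either works), apply the duality identity $\harnorm{\alpha}^2 = \int_\Sigma *\alpha$, and invoke an $L^\infty$-versus-$L^2$ estimate for harmonic forms, with the constant controlled by a lower bound on $\inj(M)$ inherited from $M_0$. That half is fine.

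The lower bound has a genuine gap, exactly at the ``packing step'' you flag. The inequality you want for a generic level set $\Sigma^*$, namely $\chi_-(\Sigma^*) \leq K\cdot\mathrm{area}(\Sigma^*)$, is simply false: one can attach arbitrarily many thin handles to an embedded surface, keeping it embedded and in the same homology class, while adding essentially nothing to the area, so the area of an \emph{arbitrary} dual surface cannot control $\chi_-$ from above, no matter how well-behaved the ambient local geometry is. The fix is precisely the minimal-surface route you dismiss. You are right that for a minimal surface $S$ Gauss--Bonnet gives $\mathrm{area}(S) \leq 2\pi\chi_-(S)$, but you overlook that a \emph{stable} minimal surface in a hyperbolic $3$-manifold also satisfies the reverse bound $\pi\chi_-(S) \leq \mathrm{area}(S)$: this comes from the second-variation (stability) inequality with test function $1$, an observation of Uhlenbeck used by Hass, and it is exactly the direction you need. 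The paper's Lemma~\ref{lem:stable} records both inequalities, and Theorem~\ref{thm:areavsthurston} then applies them to a least-area representative of $\phi$, which is a stable minimal surface by geometric measure theory and Freedman--Hass--Scott, to get $\pi\thnorm{\phi} \leq \areanorm{\phi}$. Feeding that into your coarea/Cauchy--Schwarz computation (which is the paper's Lemma~\ref{lem:areavsone} and the final step in the proof of Theorem~\ref{thm:main}) yields the $\sqrt{\vol(M)}$ lower bound you wanted, and with a universal constant $\pi$ rather than one depending on $\inj(M_0)$.
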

In fact, their proof immediately gives that the constants $C_1$ and
$C_2$ depend only on a lower bound on the injectivity radius
$\inj( M_0)$, which is half the length of the shortest closed
geodesic.  Our main result is the following refinement of
Theorem~\ref{prop:BSV}:
\begin{restatable}{theorem}{thmmain}\label{thm:main}
    For all closed orientable hyperbolic \3-manifolds $M$ one has
    \begin{equation}\label{eq:main}
      \frac{\pi}{\sqrt{\vol(M)}} \thnorm{\cdotspaced}
      \leq \harnorm{\cdotspaced}
      \leq \frac{10 \pi}{\sqrt{\inj(M)}} \thnorm{\cdotspaced}  
     \mtext{on $H^1(M ; \R)$.}
   \end{equation}
\end{restatable}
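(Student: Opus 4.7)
The plan is to prove the two inequalities of~\eqref{eq:main} separately, since each involves rather different ideas.

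For the upper bound $\harnorm{\phi} \leq (10\pi/\sqrt{\inj(M)}) \thnorm{\phi}$, the harmonic representative minimizes the $L^2$ norm among closed 1-forms representing $\phi$, so it suffices to exhibit a test 1-form $\alpha_0$ with $[\alpha_0] = \phi$ and controlled $L^2$ norm.  I would take an embedded oriented surface $S$ realizing the Thurston norm of $\phi$; since such an $S$ is incompressible, it may be isotoped to a minimal surface, and the Gauss equation $K_S = -1 + \kappa_1\kappa_2$ together with Gauss--Bonnet yields $\mathrm{area}(S) \leq 2\pi \thnorm{\phi}$.  On a Fermi tubular neighborhood $N_\epsilon(S)$ of normal width $\epsilon$ chosen in terms of $\inj(M)$, I would set $\alpha_0 = d\psi(r)$, where $r$ is signed normal distance to $S$ and $\psi \colon [-\epsilon, \epsilon] \to [0, 1]$ is a smooth bump interpolating between $0$ and $1$; extended by zero outside $N_\epsilon(S)$, this $\alpha_0$ is closed and Poincar\'e-dual to $S$.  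A direct calculation in Fermi coordinates, using that in $\mathbb{H}^3$ the volume form near a minimal surface scales essentially as $\cosh^2(r)$ times the area form, then bounds $\harnorm{\alpha_0}^2$ in terms of $\mathrm{area}(S)$ and $\epsilon$.

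For the lower bound $(\pi/\sqrt{\vol(M)}) \thnorm{\phi} \leq \harnorm{\phi}$, the Cauchy--Schwarz inequality $\onenorm{\alpha} \leq \sqrt{\vol(M)} \harnorm{\alpha}$ reduces matters to the scale-invariant $L^1$ bound $\pi \thnorm{\phi} \leq \onenorm{\alpha}$ for the harmonic representative $\alpha$.  Writing $\alpha = dF$ for $F \colon M \to S^1$ (when $\phi$ is integral), the co-area formula gives $\onenorm{\alpha} = \int_0^1 \mathrm{area}(F^{-1}(t))\, dt$ and each regular level set represents $PD(\phi)$, so it would suffice to prove $\pi \thnorm{\phi} \leq \mathrm{area}(F^{-1}(t))$ for almost every $t$.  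This is the delicate step: the naive Gauss--Bonnet estimate for minimal surfaces in $\mathbb{H}^3$ actually goes the wrong way (giving $\mathrm{area} \leq 2\pi|\chi|$), so one must either simplify the level set by compressions to a norm-realizing incompressible surface before applying Gauss--Bonnet, or route the argument through the Kronheimer--Mrowka Seiberg--Witten estimates \cite{KronheimerMrowka1997b}, where the constant scalar curvature $-6$ of the hyperbolic metric yields the clean numerical factor $\pi$.

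The hard part will be pinning down the correct scaling, especially in the upper bound.  A naive bump calculation gives only $\harnorm{\alpha_0}^2 \lesssim \mathrm{area}(S)/\inj(M) \lesssim \thnorm{\phi}/\inj(M)$, producing $\harnorm{\phi} \lesssim \sqrt{\thnorm{\phi}/\inj(M)}$, whereas homogeneity of the two norms forces the dependence on $\thnorm{\phi}$ to be linear.  Resolving this likely requires observing that the normal injectivity radius of a norm-realizing minimal surface itself shrinks as $\thnorm{\phi}$ grows---the surface winds tightly inside $M$---so that the effective width $\epsilon$ depends on $\thnorm{\phi}$, and then carefully tracking this dependence through the Fermi-coordinate $L^2$ calculation to absorb everything into the single factor $10\pi/\sqrt{\inj(M)}$.
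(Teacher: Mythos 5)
Your proposal has genuine gaps on both sides, and in each case the issue is that a key estimate is missing.

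For the lower bound, the Cauchy--Schwarz reduction and the coarea reformulation are correct and match the paper's strategy. However, the delicate step $\pi\thnorm{\phi}\leq\mathrm{area}(F^{-1}(t))$ is not resolved by either of your two suggestions. Compressing a level set reduces both its genus and its area, so this does not obviously produce a useful inequality, and routing through Kronheimer--Mrowka produces only the weaker constant $2\pi/3$ (as the paper notes). What is actually needed is the \emph{stability} estimate for minimal surfaces, which is the other half of the Gauss--Bonnet argument you started: a stable closed minimal surface $S$ in a hyperbolic $3$-manifold satisfies $\pi\eulerminus{S}\leq\Area(S)$, not just $\Area(S)\leq2\pi\eulerminus{S}$. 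This comes from the second-variation inequality with test function $f=1$ (Uhlenbeck, cf.\ Hass). Applied to the least-area surface dual to $\phi$---which is a stable minimal surface by the existence theory of Freedman--Hass--Scott---this gives $\pi\thnorm{\phi}\leq\areanorm{\phi}$. Since every regular level set of $F$ is dual to $\phi$ and hence has area at least $\areanorm{\phi}$, the coarea formula then closes the argument.

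For the upper bound, the test-form/bump approach does not work, and your diagnosis of the scaling problem is correct: you get $\harnorm{\phi}\lesssim\sqrt{\thnorm{\phi}}$ rather than a linear bound, and the hoped-for fix via shrinking normal injectivity radius of the minimal surface is not how the paper proceeds (and would be difficult to control uniformly). The paper's argument abandons test forms entirely and works directly with the harmonic representative $\alpha$. Since $*\alpha$ is closed, Poincar\'e duality gives $\harnorm{\alpha}^2 = \int_M *\alpha\wedge\alpha = \int_S *\alpha \leq \infnorm{\alpha}\,\Area(S) \leq 2\pi\,\infnorm{\alpha}\,\thnorm{\phi}$, where $S$ is the least-area dual surface of area at most $2\pi\thnorm{\phi}$. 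What is then needed is a pointwise bound of the form $\infnorm{\alpha}\leq\frac{5}{\sqrt{\inj(M)}}\harnorm{\alpha}$ for harmonic $1$-forms, after which you divide by $\harnorm{\alpha}$. That $L^\infty$-versus-$L^2$ estimate is the hard input (Theorem~\ref{thm:nosobolev}); it is proved via the series expansion of harmonic functions on $\H^3$ about a point together with a Margulis-number argument bounding the multiplicity of the covering projection on a ball in the thin part. Without some version of this pointwise bound, the upper bound in (\ref{eq:main}) is out of reach.
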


We also give families of examples which show that some (but not all)
qualitative aspects of Theorem~\ref{thm:main} are sharp.  The first
proves that the basic form of the first inequality in (\ref{eq:main}) cannot
be improved:
\begin{restatable}{theorem}{thmlowerexs}\label{thm:lowerexs}
  There exists a sequence of $M_n$ and $\phi_n \in H^1(M_n; \R)$
  so that 
  \begin{enumerate}
  \item The quantities $\vol(M_n)$ and $\inj(M_n)$ both go to infinity
    as $n$ does.  
  \item The ratio $\frac{\thnorm{\phi_n}}{\harnorm{\phi_n}
      \sqrt{\vol(M_n)}}$ is constant.  
  \end{enumerate}
\end{restatable}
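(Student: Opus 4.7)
My plan is to take $M_n$ as a tower of covers of a fixed fibered hyperbolic $3$-manifold $M_0$, chosen so that the Thurston norm grows linearly in the covering degree while the harmonic norm grows only as the square root. This is precisely the regime in which the lower bound of Theorem~\ref{thm:main} is tight, exploiting the fact that the Thurston norm of a fiber class is pinned down exactly by the Euler characteristic of the fiber.

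Concretely, fix a closed hyperbolic $3$-manifold $M_0$ fibering over $S^1$ with connected fiber $\Sigma_0$ and monodromy $\psi$, let $\phi_0 \in H^1(M_0;\R)$ be Poincar\'e dual to $\Sigma_0$, and note that $\thnorm{\phi_0} = -\chi(\Sigma_0)$ by Thurston. Writing $\pi_1(M_0) = \pi_1(\Sigma_0) \rtimes_\psi \Z$, I would choose a tower of finite-index characteristic subgroups $K_n \leq \pi_1(\Sigma_0)$ with $\bigcap_n K_n = \{1\}$; for example, take $K_n$ to be the intersection of all subgroups of $\pi_1(\Sigma_0)$ of index at most $n$, which is characteristic and of finite index. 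Being characteristic in the normal subgroup $\pi_1(\Sigma_0)$ makes $K_n$ normal in $\pi_1(M_0)$ (in particular, $\psi$-invariant), so $\Gamma_n := K_n \rtimes_\psi \Z$ is a well-defined subgroup of $\pi_1(M_0)$ of index $d_n := [\pi_1(\Sigma_0):K_n]$. The cover $M_n$ of $M_0$ corresponding to $\Gamma_n$ fibers over $S^1$ with connected fiber $\Sigma_n$, the $d_n$-fold cover of $\Sigma_0$, and I take $\phi_n$ to be the pullback of $\phi_0$.

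The ratio calculation is then immediate. Since $\Sigma_n$ is again the fiber of a hyperbolic fibration, $\thnorm{\phi_n} = -\chi(\Sigma_n) = d_n \thnorm{\phi_0}$. As $M_n \to M_0$ is a degree-$d_n$ local isometry, $\vol(M_n) = d_n \vol(M_0)$, and the pullback of the harmonic representative of $\phi_0$ is harmonic on $M_n$ (pullback under a local isometry commutes with $d$ and $*$), giving $\harnorm{\phi_n}^2 = d_n \harnorm{\phi_0}^2$. Combining these, $\thnorm{\phi_n}/(\harnorm{\phi_n}\sqrt{\vol(M_n)})$ equals $\thnorm{\phi_0}/(\harnorm{\phi_0}\sqrt{\vol(M_0)})$, independent of $n$.

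The main work is verifying $\inj(M_n) \to \infty$ (with $\vol(M_n) \to \infty$ immediate from $d_n \to \infty$). Every closed geodesic in $M_n$ is realized by an element $(k,m) \in \Gamma_n \setminus \{1\}$, with length equal to its translation length in $\mathbb{H}^3$ computed inside the fixed group $\pi_1(M_0)$. For elements with $m \neq 0$, the length is bounded below by the minimum translation length over such elements in $\pi_1(M_0)$, a fixed positive constant by discreteness. For $(k,0)$ with $k \in K_n \setminus \{1\}$, the point is that $K_n$ is normal in $\pi_1(M_0)$ and hence a union of $\pi_1(M_0)$-conjugacy classes: since only finitely many conjugacy classes have translation length at most any fixed $L$, and $\bigcap_n K_n = \{1\}$, each such class is eventually excluded from $K_n$, forcing the shortest element of $K_n\setminus\{1\}$ to have length tending to infinity. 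I expect the hardest conceptual step to be precisely this last one, where one must leverage characteristicness (not just normality) of $K_n$ in $\pi_1(\Sigma_0)$ to apply a residual finiteness argument uniformly across all closed geodesics in $M_n$, not just those lying in the fiber.
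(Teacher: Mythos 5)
Your construction of the tower of covers has a genuine gap: each subgroup $\Gamma_n = K_n \rtimes_\psi \Z$ contains the element $(1,1)$, i.e.\ the generator of the $\Z$ factor (the monodromy loop), and the translation length of this fixed hyperbolic isometry of $\H^3$ is a fixed positive number $\ell_0$ independent of $n$. The conjugacy class of $(1,1)$ in $\Gamma_n$ therefore corresponds to a closed geodesic in $M_n$ of length $\ell_0$, so $\inj(M_n) \leq \ell_0/2$ for all $n$, and the injectivity radius does not tend to infinity. You correctly note that elements with $m \neq 0$ have translation lengths bounded below by a fixed positive constant, but a fixed lower bound is not the same as a lower bound tending to infinity; the careful residual-finiteness argument you give for the $m=0$ elements does not address this, since the bounded-length geodesics come precisely from the $m\neq 0$ elements you dismiss.

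The rest of the proposal is correct, and the reduction to a fiber class is a nice way to get $\thnorm{\pi^*\phi_0} = d\,\thnorm{\phi_0}$ elementarily: the paper instead invokes Gabai's deep theorem that the Thurston norm is multiplicative under \emph{arbitrary} finite covers, which frees it to choose \emph{any} tower of covers with $\inj(M_n)\to\infty$ (such towers exist by residual finiteness of the full group $\pi_1(M_0)$). Your fiber-class argument would in fact also work with such an arbitrary tower — any finite cover of a fibered manifold is again fibered over $S^1$ by composing with the covering projection, and the preimage of the fiber is a norm-minimizing surface representing $\pi^*\phi_0$ — so the fix is simply to drop the constraint that the covers have the special form $K_n \rtimes_\psi \Z$ and instead take a tower with $\inj(M_n)\to\infty$ coming from residual finiteness of $\pi_1(M_0)$ itself.
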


The next result concerns the second inequality of (\ref{eq:main}), and
shows that the harmonic norm can blow up relative to the Thurston norm 
when the injectivity radius gets small.
\begin{restatable}{theorem}{thminjsmallexs}\label{thm:injsmallexs}
   There exists a sequence of $M_n$ and
   $\phi_n \in H^1(M_n; \R)$ so that
   \begin{enumerate}
     \item The volumes of the $M_n$ are uniformly bounded and
       $\inj(M_n) \to 0$ as $n \to \infty$.  
     \item 
       $\harnorm{\phi_n} \big/ \thnorm{\phi_n} \to \infty$ like
       $\sqrt{-\log\big(\inj(M_n)\big)}$ as $n \to \infty$. 
     \end{enumerate}
\end{restatable}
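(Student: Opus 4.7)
The plan is to realize the examples via hyperbolic Dehn surgery on a cusped hyperbolic \3-manifold $N$ of sufficient complexity, chosen so that on each filled closed manifold $M_n$ a cohomology class $\phi_n$ exists with controlled Thurston norm and a prescribed nonzero period on the core geodesic of the filled solid torus. Distinguish a torus cusp $T \subset \partial N$ with $H_1$-basis $(m,\ell)$, fix hyperbolic Dehn fillings on any other cusps of $N$, and for each large $n$ let $M_n$ be the closed manifold obtained by additionally $(1,n)$-filling $T$; Thurston's hyperbolic Dehn surgery theorem makes $M_n$ hyperbolic for $n \gg 0$, and by the Neumann--Zagier expansion the core geodesic $\gamma_n$ has length $\ell_n \sim C/n^2$, so $\inj(M_n) = \ell_n/2 \to 0$. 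Taking $N$ so that the restriction $H^1(N;\R) \to H^1(T;\R)$ is surjective (which forces $b_1(N)\geq 2$ and holds for many explicit link complements), select $\phi_n \in H^1(N;\Z)$ with $\phi_n|_T = -n\,m^* + \ell^*$; then $\phi_n$ annihilates the filling slope $m + n\ell$ and descends to $M_n$, and since $\gamma_n$ is homologous in $M_n$ to the dual slope $\ell$, its period is $\phi_n([\gamma_n]) = 1$.

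To bound the Thurston norm from above, observe that the ray $n \mapsto \phi_n$ in $H^1(N;\R)$ escapes to infinity in the direction $-m^*$; by polyhedrality of the Thurston norm ball and continuity, $\thnorm{\phi_n}_N = n \cdot \thnorm{m^*}_N + O(1) \lesssim n$. Any norm-minimizing properly embedded surface $\Sigma_n \subset N$ dual to $\phi_n$ has boundary on $T$ Poincar\'e dual to $\phi_n|_T$, which is precisely the filling slope $m + n\ell$; hence $\partial \Sigma_n$ is capped off by meridian disks in $M_n$, producing a closed dual surface with $\chi_-$ at most $\chi_-(\Sigma_n) + O(1)$. Thus $\thnorm{\phi_n}_{M_n} \lesssim n$.

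For the harmonic norm lower bound, pair $\omega_n$ (the harmonic representative of $\phi_n$) against a Poincar\'e-dual 2-form $\lambda_n$ for $[\gamma_n]$. Cauchy--Schwarz gives
\begin{equation*}
  1 = \phi_n([\gamma_n]) = \int_{\gamma_n}\omega_n = \int_{M_n}\omega_n\wedge\lambda_n \leq \harnorm{\omega_n}\,\harnorm{\lambda_n},
\end{equation*}
so $\harnorm{\omega_n} \geq 1/\harnorm{\lambda_n}$. Concentrate $\lambda_n$ in the Margulis tube around $\gamma_n$: in Fermi coordinates $(r,\theta,z)$ with metric $dr^2 + \sinh^2 r\, d\theta^2 + \cosh^2 r\, dz^2$ and tube radius $R_n$, take $\lambda_n = \rho(r)\, dr\wedge d\theta$ with $\rho$ supported in $[0, R_n]$, smoothly cut off near $r = R_n$, and normalized so that $2\pi\int_0^{R_n}\rho(r)\, dr = 1$ (making the period over a meridian disk equal to $1$). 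Direct computation gives $\harnorm{\lambda_n}^2 = 2\pi \ell_n \int_0^{R_n} \rho(r)^2 \coth r\, dr$, minimized via Cauchy--Schwarz at $\rho \propto \tanh r$ with value $\ell_n / (2\pi \log \cosh R_n)$. Since the Margulis tube volume $\pi \ell_n \sinh^2 R_n$ is bounded, $R_n = \tfrac{1}{2}\log(1/\ell_n) + O(1)$, so $\harnorm{\lambda_n}^2 \sim \ell_n / \log(1/\ell_n)$ and $\harnorm{\omega_n} \gtrsim n\sqrt{\log n}$. Combined with the Thurston bound, $\harnorm{\phi_n}/\thnorm{\phi_n} \gtrsim \sqrt{\log n} \sim \sqrt{-\log \inj(M_n)}$.

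The principal obstacle is the cohomological bookkeeping in the first step: arranging $N$ so that the boundary datum $-n m^* + \ell^*$ on $T$ actually lifts to a class $\phi_n \in H^1(N;\Z)$ for every $n$ while the other cusps remain consistently filled, and verifying that $\thnorm{\phi_n}_N$ grows linearly rather than remaining bounded (equivalently, that the limiting direction $-m^*$ lies outside any cone over a vanishing face of the Thurston norm polytope). A secondary technical point is controlling the cutoff of $\rho$ near $r = R_n$ so that the smoothing costs only a constant factor in $\harnorm{\lambda_n}$, which is straightforward by a standard bump-function argument.
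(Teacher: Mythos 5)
Your overall strategy closely mirrors the paper's: Dehn fill a cusped manifold to create a short core geodesic $\gamma_n$, identify a class $\phi_n$ with $\int_{\gamma_n}\phi_n = 1$, bound $\thnorm{\phi_n}$ above linearly in $n$, and bound $\harnorm{\phi_n}$ below using the Margulis tube. The interesting divergence is in the tube estimate. The paper proves its Lemma~\ref{lem:tube} by averaging $\alpha|_V$ over the $S^1\times S^1$ symmetry group of the tube and identifying the averaged form with the explicit harmonic form $\frac{1}{\epsilon}\dz$. You instead exhibit a test $2$-form $\lambda_n$ (a Thom class for $\gamma_n$ concentrated in the tube), use the Cauchy--Schwarz estimate $1 = \int_{M_n}\omega_n\wedge\lambda_n = \langle\omega_n, *\lambda_n\rangle_{L^2} \leq \harnorm{\omega_n}\harnorm{\lambda_n}$, and optimize over $\lambda_n$. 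It is worth noting that your optimizer $\rho \propto \tanh r$ gives $\lambda_n \propto *\dz$, so the two arguments are Hodge-dual to one another and recover the same bound $\harnorm{\omega_n} \geq \sqrt{\tfrac{2\pi}{\epsilon}\log\cosh R_n}$; your version is perhaps more modular since it does not require arguing that averaging preserves harmonicity and decreases norm, and would apply immediately to any closed $1$-form with the right period, not just harmonic ones.

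There is, however, a genuine gap in the cohomological setup that you flag but do not resolve. As stated, you fix Dehn fillings on all cusps of $N$ other than $T$ and vary the filling on $T$ alone. Once the other cusps are filled, the resulting $1$-cusped manifold $N'$ has, by the ``half lives, half dies'' lemma, a $1$-dimensional image of $H^1(N';\Q)\to H^1(T;\Q)$; the ray $-n\,m^* + \ell^*$ cannot lie in a fixed $1$-dimensional subspace for infinitely many $n$, so $\phi_n$ will not extend to $N'$, let alone $M_n$. The paper sidesteps this by using a $2$-cusped $W$ with $H_1(T_i;\Z)\to H_1(W;\Z)$ isomorphisms and filling \emph{both} cusps along $n$-dependent slopes $a - nb$, which makes the surviving $H^1(M_n;\Z) \cong \Z$ for every $n$. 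The paper further imposes an orientation-reversing involution swapping $T_1$ and $T_2$ so that the two resulting core geodesics have equal length and $\inj(M_n)$ is actually realized on $\gamma_n$, a point you would also need to address with more than one filled cusp. Finally, for the upper bound on $\thnorm{\phi_n}_{M_n}$, the paper takes $W$ fibered and uses the linearity of the Thurston norm on the fibered cone together with an explicit capping-off count; your polyhedrality argument gives the same $O(n)$ bound once the lift issue is fixed, but you should also verify that the direction $-m^*$ (or rather the limiting direction of $\phi_n$ after lifting) lies in the interior of a top-dimensional cone of the norm ball, so that the norm is actually linear along the ray rather than merely $O(n)$.
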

\noindent
The growth of $\harnorm{\phi_n} \big/ \thnorm{\phi_n}$ in
Theorem~\ref{thm:injsmallexs} is much slower than the most extreme
behavior permitted by the second inequality in (\ref{eq:main}).  We
suspect that the examples in Theorem~\ref{thm:injsmallexs} have
the worst possible behavior, but we are unable to improve (\ref{eq:main})
in that direction and believe doing so requires an entirely new
approach.


\subsection{Manifolds with large norms}

A very intriguing conjecture of \cite{BergeronSengunVenkatesh2016} is
that for congruence covers of a fixed \emph{arithmetic} hyperbolic
\3-manifold $M_0$, the size of the Thurston norm grows more slowly
than one would naively expect.  Specifically, any cover $M$ of $M_0$
should have a nontrivial $\phi \in H^1(M; \Z)$ where $\thnorm{\phi}$
is bounded by a polynomial in $\vol(M)$; in contrast, the usual
estimates using a natural triangulation of $M$ give only that there is
a $\phi$ with $\thnorm{\phi}$ is at most exponential in
$\vol(M)$. More generally, such estimates give that for any
$\epsilon > 0$ there is a contant $C$ so that any closed hyperbolic
\3-manifold $M$ with $\inj(M) > \epsilon$ and $H^1(M; \Z) \neq 0$ has
a nontrivial $\phi \in H^1(M; \Z)$ where
$\thnorm{\phi} < C^{\vol(M)}$.  Our other contribution here is to show
that such a priori estimates on the size of the Thurston norm cannot
be substantially improved.  Specifically, we give examples of closed
hyperbolic \3-manifolds where the Thurston norm is exponentially large
and yet there is a uniform lower bound on the injectivity radius.
\begin{restatable}{theorem}{thmlargenorm}\label{thm:largenorm}
  There exist constants $C_1, C_2, \epsilon_1 > 0$ and a sequence of
  closed hyperbolic \3-manifolds $M_n$ with $\vol(M_n) \to \infty$ where for
  all $n$:
  \begin{enumerate}
    \item $\inj(M_n) > \epsilon_1$, 
    \item $b_1(M_n) = 1$,
    \item $\thnorm{\phi_n} > C_1 e^{C_2 \vol(M_n)}$ where $\phi_n$ is a
        generator of $H^1(M_n; \Z)$.  \label{item:expgrowth}
  \end{enumerate}
\end{restatable}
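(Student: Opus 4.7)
The strategy is to realize the $M_n$ as cyclic covers of mapping tori of small-dilatation pseudo-Anosov homeomorphisms $\psi_n \colon \Sigma_{g_n} \to \Sigma_{g_n}$ with $g_n \to \infty$, arranged so that the fiber genus grows exponentially in the hyperbolic volume. Because the manifold is fibered and $b_1(M_n) = 1$, the generator $\phi_n$ of $H^1(M_n; \Z)$ is automatically the fibered class and is realized by the fiber, so $\thnorm{\phi_n} = 2g_n - 2$; the desired exponential lower bound then reduces to this volume-versus-genus comparison.

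The raw material is a sequence of small-dilatation pseudo-Anosovs $\psi_n$ on $\Sigma_{g_n}$ with bounded normalized entropy $g_n \log \lambda(\psi_n) = O(1)$, as produced by the constructions of Penner, Hironaka, or Farb--Leininger--Margalit. By Brock's comparison between mapping torus volume and Weil--Petersson translation distance, the mapping tori $M_{\psi_n}$ have uniformly bounded hyperbolic volume while the fiber genus $g_n$ tends to infinity. Composing $\psi_n$ with a bounded-order Dehn twist if necessary, one may further assume $(\psi_n)_*$ has no root-of-unity eigenvalue on $H_1(\Sigma_{g_n}; \mathbb{C})$, so that the Wang exact sequence for the fibration gives $b_1(M_{\psi_n^k}) = 1$ for every $k \geq 1$. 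Setting $M_n = M_{\psi_n^{k_n}}$ for suitably chosen $k_n \to \infty$ produces closed hyperbolic \3-manifolds with $b_1 = 1$, $\vol(M_n) = k_n \vol(M_{\psi_n})$, and fibered-class Thurston norm $2g_n - 2$; taking $k_n$ to grow only logarithmically in $g_n$ yields the exponential ratio $2 g_n - 2 \geq C_1 e^{C_2 \vol(M_n)}$.

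The main obstacle is the uniform lower bound $\inj(M_n) \geq \epsilon_1$. By the Farb--Leininger--Margalit theorem the mapping tori $M_{\psi_n}$ arise as Dehn fillings on a finite list of cusped hyperbolic \3-manifolds, with normalized filling slope lengths growing as $g_n \to \infty$; Thurston's Dehn surgery estimates then force the core geodesic length in $M_{\psi_n}$ to decay polynomially in $1/g_n$, so that $\inj(M_{\psi_n}) \to 0$. Passing to the $k_n$-fold cyclic cover dual to the fibered class lengthens any closed geodesic that winds nontrivially with the fibration by a factor of up to $k_n$, and the crucial claim is that the short core geodesics have nontrivial and controlled winding. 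Arranging the quantitative comparison that allows one to choose $k_n$ so as to simultaneously restore $\inj(M_n) \geq \epsilon_1$ and preserve the logarithmic bound $k_n \leq C \log g_n$ is the principal technical difficulty and the heart of the construction.
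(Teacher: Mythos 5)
Your construction runs into a quantitative incompatibility that you correctly flag as ``the principal technical difficulty,'' but which is in fact a genuine obstruction, not merely a gap to be filled. The exponential bound in item (c) requires $k_n = O(\log g_n)$, so that $\vol(M_n) = k_n \vol(M_{\psi_n}) = O(\log g_n)$ while $\thnorm{\phi_n} = 2g_n - 2$. On the other hand, as you note, $\inj(M_{\psi_n})$ decays polynomially: by Neumann--Zagier the core geodesic length is $\asymp L^{-2}$ where $L$ is the normalized filling-slope length, and in the Farb--Leininger--Margalit picture $L$ must grow polynomially in $g_n$, so $\ell_n := \inj(M_{\psi_n}) \lesssim g_n^{-a}$ for some $a > 0$. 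Passing to the $k_n$-fold cyclic cover dual to the fiber can lengthen the core geodesic by a factor of at most $k_n$ (and only by $k_n/\gcd(k_n, w_n)$ where $w_n$ is its winding number), so the shortest closed geodesic in $M_n$ has length $\lesssim k_n g_n^{-a} = O(g_n^{-a}\log g_n) \to 0$. Thus with $k_n = O(\log g_n)$ you cannot restore a uniform lower bound on injectivity radius; to do so you would need $k_n$ to grow polynomially in $g_n$, which destroys the exponential ratio and gives only polynomial growth of $\thnorm{\phi_n}$ in $\vol(M_n)$. There is no choice of $k_n$ that satisfies both requirements simultaneously.

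The paper takes a structurally different route that sidesteps this tension entirely. Instead of deep cyclic covers of shallow fibered manifolds, it glues two copies $W_1, W_2$ of a \emph{fixed} hyperbolic manifold with totally geodesic genus-$2$ boundary (e.g.\ Thurston's tripus) along the $n$-th power of a fixed pseudo-Anosov $f$. Volume grows linearly in $n$ and the injectivity radius stays bounded below by the Brock--Minsky--Namazi--Souto bi-Lipschitz model theorem, because the central part of $M_n$ is modeled on $n$ fundamental domains of the fiber cover of the bounded-geometry mapping torus $M_f$. Crucially, the exponential growth of the Thurston norm does \emph{not} come from the genus of a fiber (these $M_n$ need not fiber at all): $f$ is chosen so that $f^*$ acts as an Anosov matrix on a rank-$2$ subspace of $H^1(S)$, forcing the coefficients of the generator $\phi_n$ of $H^1(M_n;\Z) \cong \Z$, restricted to $H^1(W_1)$, to grow like $\lambda^n$. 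Nondegeneracy of the Thurston norm on the acylindrical piece $W_1$ plus a splitting inequality $\thnorm{\phi_n} \geq \thnorm{\phi_n|_{W_1}}$ then gives the exponential lower bound. The essential lesson is that to keep the geometry thick while the norm grows exponentially, one should make the cohomology class itself ``spiral'' under a pseudo-Anosov action on a fixed surface, rather than inflate the fiber genus.
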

\noindent
Presumably, though we do not show this, the examples in
Theorem~\ref{thm:largenorm} are nonarithmetic, and so point toward a
divergence in behavior of the regulator term in the analytic torsion
in the arithmetic and nonarithmetic cases, which is consistent with
e.g.~the experiments of \cite[\S 4]{BrockDunfield2015}.  

\subsection{Proof highlights} The arguments for the two inequalities
of (\ref{eq:main}) in Theorem~\ref{thm:main} are mostly independent.
For the first inequality, we take a quite different approach from that
of \cite{BergeronSengunVenkatesh2016}.  Namely, to mediate between the
topological Thurston norm and the analytical harmonic norm, we study
what we call the \emph{least-area norm}.  For an integral class
$\phi \in H^1(M; \Z)$, this norm is simply the least area of any
embedded smooth surface dual to $\phi$.  Using results on minimal
surfaces of Schoen and Uhlenbeck, we show this new norm is uniformly
comparable to the Thurston norm in the context of hyperbolic
\3-manifolds (Theorem~\ref{thm:areavsthurston}). Also, an argument
using the coarea formula in geometric measure theory allows us to
reinterpret the least-area norm as the $L^1$-analog of the $L^2$-based
harmonic norm (Lemma~\ref{lem:areavsone}).  With these connections in
place, the first inequality of (\ref{eq:main}) boils down to simply
the Cauchy-Schwarz inequality.

In contrast, our proof of the second inequality in (\ref{eq:main})
follows the approach of \cite{BergeronSengunVenkatesh2016} closely.
The key improvement is a refined upper bound on the
$L^\infty$-norm of a harmonic \1-form in terms of its harmonic norm.
The latter result is Theorem~\ref{thm:nosobolev} and is proved by a
detailed analysis of the natural series expansion of a harmonic
\1-form about a point in $\H^3$.

For the examples, Theorem~\ref{thm:lowerexs} uses a simple
construction involving finite covers, and
Theorem~\ref{thm:injsmallexs} is based on Dehn filling a suitable
2-cusped hyperbolic \3-manifold.  Finally, the proof of
Theorem~\ref{thm:largenorm} involves gluing together two acylindrical
homology handlebodies by a large power of a pseudo-Anosov; as with our
prior work on integer homology spheres with large injectivity radius
\cite{BrockDunfield2015}, the key is controlling the homology of the
resulting manifolds.

\subsection{Acknowledgements} Brock was partially supported by US NSF
grant DMS-1207572 and the GEAR Network (US NSF grant DMS-1107452), and
this work was partially conducted at ICERM.  Dunfield was partially
supported by US NSF grant DMS-1106476, the GEAR Network, a Simons
Fellowship, and this work was partially completed while visiting
ICERM, the University of Melbourne, and the Institute for Advanced
Study.  We gratefully thank Nicolas Bergeron, Haluk \Sengun, Akshay
Venkatesh, Anil Hirani, Mike Freedman, Tom Church, Tom Mrowka, and
Peter Kronheimer for helpful conversations and comments.  Finally, we
thank the several referees for their helpful comments and suggestions.

\section{The harmonic and Thurston norms}
\label{sec:norms}

\subsection{Conventions} Throughout this paper, all manifolds of any
dimension are orientable and moreover oriented. All cohomology will
have $\R$ coefficients unless noted otherwise.

\subsection{The harmonic norm}
For a closed Riemannian \3-manifold $M$, the natural positive\hyp definite
inner product on the space $\Omega^k(M)$ of real-valued $k$-forms is
given by
\[
\pair{\alpha, \beta} = \int_M \alpha \wedge *\beta  \mtext{where $*$
  is the Hodge star operator.}
\]
The \emph{harmonic representative} of a class $[\alpha] \in H^k(M)$ is
the unique one that minimizes $\pair{\alpha, \alpha}$; equivalently,
if $\Delta = d d^* + d^* d$ is the Hodge Laplacian, it is the
representative where $\Delta \alpha = 0$.  Thus the cohomology
$H^k(M)$ inherits the above inner product via the identification of it
with the subspace of harmonic forms.  The corresponding norm on
$H^k(M)$ is, by definition, the \emph{harmonic norm}
$\harnorm{\cdotspaced}$ discussed in the introduction.  Equivalently,
it is defined by
\begin{equation}\label{eq:harnormalt}
  \harnorm{\phi} = \inf \setdef{ \harnorm{\alpha}}{\mbox{$\alpha \in
    \Omega^k(M)$ represents $\phi$}}
\end{equation}

For a 1-form $\alpha$ on $M$, a useful geometric viewpoint on
$\harnorm{\alpha}$ is the following. For a point $p$ in $M$, 
denote the operator norm of the linear functional
$\alpha_p \maps T_p M \to \R$ by $\abs{\alpha_p}$; equivalently
$\abs{\alpha_p} = \sqrt{*\big(\alpha_p \wedge * \alpha_p\big)}$ which
is also just the length of $\alpha_p$ under the metric\hyp induced isomorphism
of $T^p M \to T_p M$.  The harmonic norm of $\alpha$ is then simply the
$L^2$-norm of the associated function
$\abs{\alpha} \maps M \to \R_{\geq 0}$ since
\begin{equation}\label{eq:L2asOp}
\harnorm{\alpha} = \sqrt{\int_M \alpha \wedge *\alpha} = \sqrt{\int_M
\abs{\alpha}^2 \ \dVol}
\end{equation}
Analogously, we define the $L^1$- and $L^\infty$-norms of the
1-form $\alpha$ as 
\begin{equation}\label{eq:formnorms}
\onenorm{\alpha} = \int_M \abs{\alpha} \ \dVol \mtext{and}
\infnorm{\alpha} = \max_{p \in M} \abs{\alpha_p}.
\end{equation}

\subsection{The Thurston norm}
For a connected surface, define
$\eulerminus{S} = \max\big( -\chi(S), 0 \big)$; extend this to all
surfaces via
$\eulerminus{S \sqcup S'} = \eulerminus{S} + \eulerminus{S'}$.  For a
compact irreducible \3-manifold $M$, the Thurston norm of
$\phi \in H^1(M; \Z) \cong H_2(M, \partial M; \Z)$ is defined by
\[
\thnorm{ \phi } = \min \setdef{ \eulerminus{S} }{\mbox{$S$ is a properly
    embedded surface dual to $\phi$}}
\]
The Thurston norm extends by continuity to all of $H^1(M)$, and the
resulting unit ball is a finite-sided rational polytope
\cite{Thurston1986}.  When $M$ is hyperbolic, the Thurston norm is
non-degenerate with $\thnorm{\phi} > 0$ for all nonzero $\phi$; in
general, it is only a seminorm.

\section{The least area norm}

To mediate between the harmonic and Thurston norms, we introduce two
additional norms on the first cohomology of a closed Riemannian
\3-manifold $M$, namely the least-area norm and the $L^1$-norm. In
fact, these two norms coincide, but the differing perspectives they
offer are a key tool used to prove the lower bound in
Theorem~\ref{thm:main}.

For $\phi$ in $H^1(M; \Z)$, let $\cF_\phi$ be the
collection of smooth maps $f \maps S \to M$ where $S$ is a closed
oriented surface with $f_*([S])$ dual to $\phi$.  The \emph{least area
  norm} of $\phi$ is
\[
\areanorm{\phi} = \inf \setdef{\Area\left(f(S)\right)}{f \in \cF_\phi}
\]
By standard results in geometric measure theory, the value
$\areanorm{\phi}$ is always realized by a smooth \emph{embedded}
surface $S \subset M$, whose components may be weighted by integer
multiplicities; see e.g.~\cite[Lemma~2.1]{Hass1988} for details.  We
will show below that $\areanorm{\cdotspaced}$ is a seminorm
on $H^1(M; \Z)$ which extends continuously to a seminorm on
all of $H^1(M)$.

In analogy with (\ref{eq:harnormalt}), we use the $L^1$-norm on
1-forms given in (\ref{eq:formnorms}) to define the following function
on $H^1(M)$:
\[
\onenorm{\phi} = \inf \setdef{ \onenorm{\alpha}}{\mbox{$\alpha \in
    \Omega^1(M)$ represents $\phi$}}
\]
Unlike the harmonic norm, the value $\onenorm{\phi}$ is typically not
realized by any smooth form $\alpha$.  Despite this, it is easy to
show that $\onenorm{\cdotspaced}$ is a seminorm on $H^1(M)$.  As
promised, these two new norms are in fact the same:

\begin{lemma}\label{lem:areavsone}
  $\areanorm{\phi} = \onenorm{\phi}$ for all $\phi \in H^1(M; \Z)$.
\end{lemma}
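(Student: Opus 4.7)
The plan is to establish each inequality separately, using the coarea formula from geometric measure theory in one direction and a bump\hyp function construction around a least\hyp area representative in the other.

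First I would prove $\areanorm{\phi} \leq \onenorm{\phi}$. Let $\alpha$ be any closed smooth \1-form representing the integral class $\phi$. Since the periods of $\alpha$ lie in $\Z$, integration produces a smooth map $f \maps M \to S^1 = \R/\Z$ (with $S^1$ of total length $1$) satisfying $df = \alpha$. The coarea formula then yields
\[
\onenorm{\alpha} \ = \ \int_M \abs{df}\ \dVol \ = \ \int_{S^1} \mathcal{H}^2\big(f^{-1}(t)\big)\, dt,
\]
where $\mathcal{H}^2$ denotes \2-dimensional Hausdorff measure. By Sard's theorem almost every $t \in S^1$ is a regular value, and for such $t$ the level set $f^{-1}(t)$ is a smooth closed embedded surface dual to $\phi$; averaging over $t$ produces a regular value with $\mathcal{H}^2\big(f^{-1}(t)\big) \leq \onenorm{\alpha}$, and hence $\areanorm{\phi} \leq \onenorm{\alpha}$. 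Taking the infimum over $\alpha$ gives the desired inequality.

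For the reverse inequality $\onenorm{\phi} \leq \areanorm{\phi}$, I would smear a Poincar\'e dual of a least\hyp area representative of $\phi$ into a thin normal tube. Fix $\epsilon > 0$ and use the cited minimal surface results to pick an embedded weighted surface $S = \sum n_i S_i$ dual to $\phi$ with $\sum n_i \Area(S_i) \leq \areanorm{\phi} + \epsilon$. For $\delta > 0$ small enough that the components $S_i$ admit disjoint tubular neighborhoods $N_i \cong S_i \times (-\delta, \delta)$ via the normal exponential map, choose a smooth bump $\rho$ on $\R$ with support in $(-\delta, \delta)$ and $\int \rho\, dt = 1$, and define $\alpha = \sum n_i\, \rho(t)\, dt$ on $N_i$, extended by zero elsewhere. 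Then $\alpha$ is a smooth closed \1-form Poincar\'e dual to $S$, so $[\alpha] = \phi$. Writing the metric in normal coordinates, the volume form on $N_i$ expands as $\big(1 + O(\delta)\big)\, dA \wedge dt$, so Fubini yields
\[
\onenorm{\alpha} \ \leq \ \big(1 + O(\delta)\big)\, \sum_i n_i \Area(S_i).
\]
Letting $\delta \to 0$ and then $\epsilon \to 0$ gives $\onenorm{\phi} \leq \areanorm{\phi}$.

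The main technical point is the bump\hyp function construction: one must control the Jacobian of the normal exponential map so that smearing a weighted surface into a thin tube costs only a vanishing fraction of its area, and some bookkeeping is required to treat the integer multiplicities $n_i$ and the (harmless) possibility that different sheets of $S$ approach one another on a scale smaller than $\delta$. The coarea direction, while conceptually the heart of the proof, is essentially automatic once the circle\hyp valued map $f$ is written down.
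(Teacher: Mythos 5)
Your proposal is correct and follows essentially the same approach as the paper's: the $\areanorm{\phi} \leq \onenorm{\phi}$ direction uses the coarea formula applied to the circle-valued map obtained by integrating a representative $1$-form, and the reverse direction concentrates a bump form in a thin tubular neighborhood of a (near-)minimal dual surface. The only cosmetic difference is that the paper invokes an exact area-minimizing weighted surface (via Hass's compactness result) and writes the bump form as a smoothing of $\frac{1}{2\epsilon}\,d(\text{signed distance to } S)$, whereas you work with an $\epsilon$-approximate minimizer and a generic bump profile $\rho$; both yield the same estimate.
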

Note that one consequence of Lemma~\ref{lem:areavsone} is the promised
fact that $\areanorm{\cdotspaced}$ extends continuously from
$H^1(M; \Z)$ to a seminorm on all of $H^1(M)$.

\begin{proof}
  To show $\areanorm{\phi} \geq \onenorm{\phi}$, let $S$ be a smooth
  embedded surface dual to $\phi$ of area $\areanorm{\phi}$.  For each
  $\epsilon > 0$, consider a dual 1-form $\alpha_{\epsilon}$ which is
  supported in an $\epsilon$\hyp neighborhood of $S$ and is a slight
  smoothing of
  $\frac{1}{2 \epsilon} d \big(\mbox{signed distance to S}\big)$
  there.  An easy calculation in Fermi coordinates shows that
  $\onenorm{\alpha_{\epsilon}} \to \Area(S) =\areanorm{\phi}$ as
  $\epsilon \to 0$, giving $\areanorm{\phi} \geq \onenorm{\phi}$.

  To establish $\areanorm{\phi} \leq \onenorm{\phi}$, let $\alpha$ be
  any representative of $\phi$.  Since $\phi$ is an integral class, by
  integrating $\alpha$ we get a smooth map $f \maps M \to S^1$ so that
  $\alpha = f^*(\dt)$, where we have parameterized $S^1 = \R/\Z$ by
  $t \in [0, 1]$.  For almost all $t \in [0, 1]$, the set
  $S_t = f^{-1}(t)$ is a smooth surface.  For all $t$, we define
  $\Area(S_t)$ to be the 2-dimensional Hausdorff measure of $S_t$. As
  the operator norm $\abs{\alpha}$ is equal to the 1-Jacobian of the
  map $f$, the Coarea Formula \cite[Theorem 3.8]{Morgan2009} is
  precisely
  \begin{equation}\label{eq:areas}
    \int_M \abs{\alpha} \ \dVol = \int_0^1 \Area(S_t) \ \dt 
  \end{equation}
  Since a function on $[0, 1]$ is less than or equal to its mean on a set of
  positive measure, there are many $t$ so that $S_t$ is smooth
  and $\Area(S_t) \leq \onenorm{\alpha}$.  Taking the infimum over
  representatives $\alpha$ of $\phi$ gives
  $\areanorm{\phi} \leq \onenorm{\phi}$ as desired.
\end{proof}

\subsection{Relationship with the Thurston norm}

When $M$ is hyperbolic, the least area norm is very closely related to
the Thurston norm:
\begin{theorem}\label{thm:areavsthurston}
  For any closed hyperbolic \3-manifold $M$ and  $\phi \in
  H^1(M )$ one has:
  \begin{equation}\label{eq:thurvsone}
    \pi \thnorm{\phi} \leq \areanorm{\phi} \leq 2 \pi
    \thnorm{\phi} 
  \end{equation}
\end{theorem}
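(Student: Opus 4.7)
My plan is to use one object to prove both inequalities: an area-minimizing representative $\Sigma = \sum_i n_i \Sigma_i$ of an integral class $\phi \in H^1(M; \Z)$ (with positive integer weights $n_i$ on smooth embedded components $\Sigma_i$), supplied by the geometric measure theory result already cited for Lemma~\ref{lem:areavsone}; the real case of the theorem then follows by continuity. Each $\Sigma_i$ is a stable minimal surface in $M$, so the Gauss equation in constant sectional curvature $-1$ gives $K_{\Sigma_i} = -1 - k^2$, where $\pm k$ are the principal curvatures (minimality forces $k_2 = -k_1$). In particular $K_{\Sigma_i} \leq -1$, which via Gauss-Bonnet forces $\chi(\Sigma_i) < 0$ on every component, ruling out sphere and torus pieces.

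For the upper bound I would produce a convenient competing representative by starting with an embedded Thurston-norm minimizer $S_0$, necessarily incompressible in the irreducible hyperbolic $M$, and isotoping $S_0$ within its isotopy class to an area-minimizing stable minimal surface $\Sigma_0$ via Freedman-Hass-Scott or Meeks-Simon-Yau; then $\eulerminus{\Sigma_0} = \eulerminus{S_0} = \thnorm{\phi}$. Applying $K \leq -1$ and Gauss-Bonnet on $\Sigma_0$,
\[
\Area(\Sigma_0) \leq -\int_{\Sigma_0} K \, \dA = -2\pi \chi(\Sigma_0) = 2\pi \thnorm{\phi}.
\]
Since $\Sigma$ minimizes area among representatives of $\phi$, $\areanorm{\phi} = \Area(\Sigma) \leq \Area(\Sigma_0) \leq 2\pi \thnorm{\phi}$.

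For the lower bound I would work directly on $\Sigma$. On each component, Gauss-Bonnet becomes the identity
\[
2\pi \eulerminus{\Sigma_i} = -\int_{\Sigma_i} K \, \dA = \Area(\Sigma_i) + \int_{\Sigma_i} k^2 \, \dA,
\]
so the task reduces to bounding $\int_{\Sigma_i} k^2 \, \dA$ from above in terms of $\Area(\Sigma_i)$. The analytic heart of the argument is the stability of $\Sigma_i$ as an area minimizer: substituting the constant test function $f \equiv 1$ into the second-variation inequality
\[
\int_{\Sigma_i} |\nabla f|^2 \, \dA \geq \int_{\Sigma_i} \bigl(|A|^2 + \mathrm{Ric}(N,N)\bigr) f^2 \, \dA,
\]
together with $|A|^2 = 2k^2$ and $\mathrm{Ric}(N,N) = -2$ in our ambient curvature $-1$ space, yields $\int_{\Sigma_i} k^2 \, \dA \leq \Area(\Sigma_i)$. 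Plugging this into the Gauss-Bonnet identity gives $\Area(\Sigma_i) \geq \pi \eulerminus{\Sigma_i}$, and summing with multiplicities
\[
\areanorm{\phi} = \sum_i n_i \Area(\Sigma_i) \geq \pi \sum_i n_i \eulerminus{\Sigma_i} \geq \pi \thnorm{\phi},
\]
the last step because $n_i$ disjoint parallel copies of each $\Sigma_i$ form an honest embedded dual surface $S'$ with $\eulerminus{S'} = \sum_i n_i \eulerminus{\Sigma_i}$. The main obstacle is this lower bound: the stability test with $f \equiv 1$ is the key analytic input, and it is precisely what yields the constant $\pi$ (as opposed to $2\pi$), by exactly balancing the two terms appearing in the Gauss-Bonnet identity.
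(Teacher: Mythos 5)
Your proposal is correct and follows essentially the same route as the paper: both inequalities reduce to the fact that a stable closed minimal surface $S$ in a hyperbolic $3$-manifold satisfies $\pi \eulerminus{S} \leq \Area(S) \leq 2\pi \eulerminus{S}$ (the paper's Lemma~\ref{lem:stable}, where the $\pi$ lower bound is obtained exactly as you do, via the second-variation inequality with $f\equiv 1$ combined with the Gauss equation and Gauss--Bonnet, citing Hass's Lemma~6). The paper likewise uses Freedman--Hass--Scott to realize the Thurston-norm minimizer as a stable minimal surface for the upper bound, and the least-area representative for the lower bound; your handling of the integer multiplicities $n_i$ just makes explicit a point the paper leaves implicit.
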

The moral behind this result is that \emph{stable} minimal
surfaces in hyperbolic \3-manifolds have uniformly bounded intrinsic
curvature \cite{Schoen1983}, and hence area and genus are essentially
proportionate. Specifically, we will use the following fact, which was
first observed by Uhlenbeck \cite{Uhlenbeck1980} in unpublished work.
\begin{lemma}\label{lem:stable}
  For any stable closed minimal surface $S$ in a hyperbolic
  \3-manifold:
  \begin{equation}\label{eq:stable}
      \pi \eulerminus{S} \leq \Area(S) \leq 2 \pi \eulerminus{S}.
    \end{equation}
\end{lemma}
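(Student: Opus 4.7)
The plan is to deduce both inequalities from a single Gauss--Bonnet computation, with the stability hypothesis entering only to close out the lower bound. Let $A$ denote the second fundamental form of $S$ with respect to a unit normal $\nu$. Since $S$ is minimal, the shape operator is trace-free, so $\det A = -\tfrac{1}{2}|A|^2$; combined with the fact that the ambient sectional curvatures are all $-1$, the Gauss equation collapses to $K_S = -1 - \tfrac{1}{2}|A|^2$. Integrating over $S$ via Gauss--Bonnet yields the master identity
\[
2\pi \chi(S) \;=\; -\Area(S) \;-\; \tfrac{1}{2} \int_S |A|^2 \, \dA. \qquad (\ast)
\]

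Before using $(\ast)$ I would observe that its right-hand side is strictly negative on each connected component of $S$, so that $S$ contains no spherical or toroidal component and $\eulerminus{S} = -\chi(S)$. The upper bound in \eqref{eq:stable} is then immediate: discarding the nonnegative $|A|^2$-term from $(\ast)$ gives $\Area(S) \leq -2\pi\chi(S) = 2\pi \eulerminus{S}$.

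For the lower bound I would feed the constant test function $f \equiv 1$ into the stability inequality. For a two-sided stable minimal surface in any Riemannian \3-manifold this inequality reads $\int_S \bigl(|A|^2 + \operatorname{Ric}_M(\nu,\nu)\bigr) \, \dA \leq \int_S |\nabla f|^2 \, \dA$, and in the hyperbolic setting $\operatorname{Ric}_M(\nu,\nu) = -2$ while the right-hand side vanishes. Hence $\int_S |A|^2 \, \dA \leq 2 \Area(S)$, and substituting this into $(\ast)$ collapses it to $2\pi\chi(S) \geq -2\Area(S)$, equivalent to $\pi \eulerminus{S} \leq \Area(S)$.

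There is no genuine obstacle here; the argument is essentially the one due to Uhlenbeck and refined by Schoen. The two inputs used are standard (Gauss--Bonnet and stability against constant test functions), and the rest is linear algebra on the $2\times 2$ shape operator. The one point worth verifying is that $S$ is two-sided so that $\nu$ and the stability inequality make sense in their usual form, which is automatic from the orientability of $S$ and $M$ in force throughout the paper. If one later wishes to allow weighted components, as appear among least-area representatives of $\areanorm{\cdotspaced}$, both $(\ast)$ and the stability inequality extend componentwise with multiplicity, so both bounds survive verbatim.
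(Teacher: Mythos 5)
Your proof is correct and follows the same route as the paper's, which cites Hass and Uhlenbeck for precisely this Gauss--Bonnet plus stability-with-$f\equiv 1$ argument. The only difference is that you carry out the stability computation explicitly where the paper defers the lower bound to \cite[Lemma~6]{Hass1995}, and you record the refined identity $K_S = -1 - \tfrac12|A|^2$ rather than merely $K_S \le -1$; both are harmless elaborations of the same proof.
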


\begin{proof}[Proof of Lemma~\ref{lem:stable}]
  The proof is essentially the same as \cite[Lemma~6]{Hass1995}, which
  you should see for details.  As $S$ is minimal, its intrinsic
  curvature $K \maps S \to \R$ is bounded above by that of hyperbolic
  space, i.e.~by $-1$.  In particular, by Gauss-Bonnet, every
  component of $S$ has negative Euler characteristic and moreover
  \[
  2 \pi \eulerminus{S} = - 2 \pi \chi (S) = \int_S -K \ \dA \geq
  \int_S 1 \ \dA = 
  \Area(S) 
  \] 
  giving the righthand inequality in (\ref{eq:stable}).  
  For the other inequality, since $S$ is stable, the main argument in
  \cite[Lemma~6]{Hass1995} with the test function $f=1$ gives that 
  $\pi \eulerminus{S} \leq \Area(S)$ as desired.
\end{proof}

\begin{proof}[Proof of Theorem~\ref{thm:areavsthurston}]
  Pick a surface $S$ dual to $\phi$ which is incompressible and
  realizes the Thurston norm, i.e.~$\thnorm{\phi} = \eulerminus{S}$.
  Since $S$ is incompressible, by \cite{FreedmanHassScott1983} we can
  assume that $S$ has least area in its isotopy class and hence is a
  stable minimal surface. Thus by Lemma~\ref{lem:stable} we have
  \[
  \areanorm{\phi} \leq \Area(S) \leq 2 \pi \eulerminus{S} = 2 \pi \thnorm{\phi}.
  \]
  proving the second half of (\ref{eq:thurvsone}). 

  For the other inequality, suppose $S$ is a least area surface dual to $\phi$.
  Note again that $S$ must be a stable minimal surface, and so
  Lemma~\ref{lem:stable} gives
  \[
  \pi \thnorm{\phi} \leq \pi \eulerminus{S} \leq \Area(S) = \areanorm{\phi}
  \]
  proving the rest of (\ref{eq:thurvsone}).
\end{proof}

\section{Pointwise bounds on harmonic 1-forms}

Let $M$ be a closed hyperbolic \3-manifold. A key tool used by
\cite{BergeronSengunVenkatesh2016} in their proof of both inequalities
(\ref{eq:norms}) in Theorem~\ref{prop:BSV} is that there is a constant
$C$, depending somehow on the injectivity radius of the hyperbolic
\3-manifold, so that
\[
\infnorm{\cdotspaced} \leq C \harnorm{\cdotspaced} 
\]
In our parallel Theorem~\ref{thm:main}, we will use this fact only in
the second inequality in (\ref{eq:main}), after first refining it
into:
\begin{theorem}\label{thm:nosobolev}
  If $\alpha$ is a harmonic 1-form on a closed hyperbolic
  manifold $M$ then 
  \begin{equation}\label{eq:supnorm}
  \infnorm{\alpha} \leq \frac{5}{\sqrt{\inj(M)}} \harnorm{\alpha}
  \end{equation}
\end{theorem}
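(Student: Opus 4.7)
I will prove the stronger local statement: for any harmonic 1-form $\tilde\alpha$ on a geodesic ball $B = B(\tilde p, R) \subset \H^3$,
\[
\abs{\tilde\alpha(\tilde p)}^2 \leq \frac{25}{R}\, \harnormB{\tilde\alpha}^2.
\]
This implies the theorem: pick $p \in M$ realizing $\infnorm{\alpha}$, lift to $\tilde p \in \H^3$, and apply the local bound to the lift of $\alpha$ on the ball of radius $R = \inj(M)$. Since this ball embeds isometrically in $M$, we have $\harnormB{\tilde\alpha} \leq \harnorm{\alpha}$ and the theorem follows after taking square roots.

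The strategy for the local estimate is an explicit modal analysis in geodesic polar coordinates $(r, \omega)$ about $\tilde p$, in which the hyperbolic metric reads $\dr^2 + \sinh^2(r)\, g_{S^2}$. Decomposing $\tilde\alpha$ into a radial part $f(r,\omega)\dr$ plus a tangential part $\beta(r,\omega)$, and expanding each in (scalar and vector) spherical harmonics on $S^2$, yields a series
\[
\tilde\alpha = \sum_{\ell, m} \alm\, \omegalm
\]
in which each model form $\omegalm$ is the product of a spherical harmonic built from $\Ylm$ with a radial profile $\psil(r)$. The harmonicity conditions $d\tilde\alpha = 0$ and $d^*\tilde\alpha = 0$ decouple mode by mode into linear second-order ODEs for the $\psil$; imposing smoothness at $r = 0$ selects a unique hypergeometric solution $\twoFone{\cdot}{\cdot}{\cdot}{\cdot}$ in each sector.

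Two observations then combine to give the local estimate. First, only the $\ell = 1$ coefficients contribute to $\abs{\tilde\alpha(\tilde p)}^2$: the spherically symmetric $\ell = 0$ mode is forced to vanish by the joint regularity and coclosedness conditions, while the $\ell \geq 2$ spherical harmonics vanish at the origin to order at least $r$. Second, the $L^2$-orthogonality of the spherical harmonics on each sphere $\{r = \mathrm{const}\}$ gives a decomposition
\[
\harnormB{\tilde\alpha}^2 = \sum_{\ell, m} \abs{\alm}^2 \int_0^R \psil(r)^2\, \sinh^2(r)\, \dr
\]
up to explicit normalization constants coming from the $\omegalm$. Throwing away the $\ell \neq 1$ terms reduces the required inequality to a one-variable comparison between $\psi_1(0)^2$ and $\int_0^R \psi_1(r)^2\, \sinh^2(r)\, \dr$.

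The main obstacle is this final quantitative step: the $\ell = 1$ radial integral admits a closed-form expression built from $R$, $\coth(R)$, and $\csch(R)$ (as foreshadowed by the macros in the preamble), and one must verify that the constant $5 = \sqrt{25}$ is uniformly admissible in $R$. The most delicate regime is the small-$R$ end of the spectrum, where the target bound $25/R$ diverges and the leading-order asymptotic behavior of the integral must be balanced carefully; at moderate $R$ one expects the constant $5$ to be essentially sharp, while for large $R$ the hyperbolic volume growth $\sinh^2(r)$ makes the estimate comfortable.
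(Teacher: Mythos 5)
Your local strategy has a genuine gap: the inequality you plan to reduce everything to, namely
\begin{equation*}
  \abs{\tilde\alpha(\tilde p)}^2 \ \leq\ \frac{25}{R}\, \harnormB{\tilde\alpha}^2
  \qquad\text{for every harmonic 1-form on } B = B(\tilde p, R)\subset\H^3,
\end{equation*}
is \emph{false} for small $R$. The sharp local estimate on a ball in $\H^3$ (which the paper records as Lemma~\ref{lem:dfsharp}, with equality attained by an explicit $\ell=1$ mode) is $\abs{\df_p}\leq \nu(r)^{-1/2}\harnormB{\df}$ with $\nu(r)\sim \tfrac{4\pi}{3}r^3$ as $r\to 0$. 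Thus the sharp local ratio is $\nu(R)^{-1}\sim \tfrac{3}{4\pi R^3}$, which outpaces $25/R$ once $R\lesssim 0.1$. Since closed hyperbolic $3$-manifolds can have arbitrarily small injectivity radius, taking $R=\inj(M)$ and applying a ball-of-radius-$R$ estimate cannot, by itself, give a bound that scales like $\inj(M)^{-1/2}$; the best such local estimates genuinely scale like $\inj(M)^{-3/2}$. Your final paragraph acknowledges the small-$R$ regime is delicate but treats it as a matter of balancing asymptotics; in fact the bound you target is simply not available locally.

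What is missing is the \emph{global} ingredient that the paper supplies: a Margulis-tube/degree-of-cover argument. When $\epsilon=\inj(M)<\mu/2$ (with $\mu$ a Margulis number for $M$, e.g.\ $\mu=0.29$ if $H^1(M)\neq 0$), the paper lifts to a ball of the \emph{fixed} radius $\mu/2$ in $\H^3$ rather than of radius $\inj(M)$. At that fixed radius the local constant $\nu(\mu/2)^{-1/2}$ is just a number; the covering map $\pi\colon\H^3\to M$ is no longer injective on this ball, but its fibers over any point meet the ball in at most $\mu/\epsilon$ points (Lemma~\ref{lem:degree}), so $\harnorm{\tilde\alpha|_B}\leq \sqrt{\mu/\epsilon}\,\harnorm{\alpha}$. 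Combining these two bounded factors is precisely what produces the $\inj(M)^{-1/2}$ scaling in (\ref{eq:supnorm}). Your spherical-harmonic/hypergeometric radial analysis does match the paper's computation of $\nu$ and its reduction to the $\ell=1$ sector, and your argument works as stated when $\inj(M)\geq\mu/2$ (the paper's ``easy case''), but the proposal needs the degree-bound step for small injectivity radius — and that step is the actual content of the theorem.
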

\begin{remark}
  While the $5$ in (\ref{eq:supnorm}) can be improved, it seems
  unlikely that the exponent on $\inj(M)$ can be significantly
  reduced. If $T_{\epsilon}$ is a tube of volume $1$ around a core
  geodesic of length $2 \epsilon$, then there is a harmonic 1-form
  $\alpha_\epsilon$ on $T_\epsilon$ (namely $\dz$ in cylindrical
  coordinates) so that
  \[
  \frac{\infnorm{\alpha_{\epsilon}}}{\harnorm{\alpha_{\epsilon}}} \asymp
  \left( \epsilon \log( \epsilon^{-1} ) \right)^{-1/2}
  \]
\end{remark}

Our proof of Theorem~\ref{thm:nosobolev} starts by understanding how
certain harmonic \1-forms behave on balls in $\H^3$ via
\begin{lemma}\label{lem:dfsharp}
If $f \maps \H^3 \to \R$ is harmonic and $B$ is a ball of 
radius $r$ centered about $p$ then 
\begin{equation}\label{eq:dfbound}
  \abs{\df_p} \leq \frac{1}{\sqrt{\nu(r)}} \harnormB{\df}
\end{equation}
where 
\begin{equation}
  \nu(r) = \nuformula{r} 
\end{equation}
\end{lemma}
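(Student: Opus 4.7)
The plan is to expand the harmonic function $f$ in geodesic polar coordinates $(r, \theta, \phi)$ centered at $p$, and exploit the fact that $df_p$ sees only the degree-one spherical-harmonic coefficients of $f$, while $\harnormB{df}$ sees every coefficient. In these coordinates the hyperbolic metric takes the form $\dr^2 + \sinh^2(r)\, g_{S^2}$, and separation of variables writes any harmonic $f$ as
\[
f(r, \theta, \phi) \;=\; \sum_{\ell \geq 0} \sum_{\abs{m} \leq \ell} \alm\, \psil(r)\, \Ylm(\theta, \phi),
\]
where $\Ylm$ are $L^2$-orthonormal spherical harmonics and $\psil$ is the solution of $(\sinh^2(r)\, \psil')' = \ell(\ell+1)\, \psil$ that is regular at the origin, normalized so that $\psil \Ylm$ extends smoothly across $p$ (equivalently, so $\psil$ vanishes to exact order $\ell$ at $r = 0$).

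First I would compute $df_p$. The $\ell = 0$ mode is constant and contributes nothing to $df_p$. For $\ell \geq 2$, the smooth function $\psil \Ylm$ vanishes to order at least two at $p$, so its differential vanishes there as well. Only the three $\ell = 1$ modes survive, giving an identity of the form
\[
\abs{df_p}^2 \;=\; K \sum_{m = -1}^{1} a_{1m}^2
\]
for an explicit constant $K$ determined by the normalizations of $\psi_1$ and $Y_{1m}$.

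Next I would compute $\harnormB{df}^2 = \int_B \abs{df}^2\, \dVol$. Splitting $d(\psil \Ylm)$ into its radial part $\psil' \Ylm\, \dr$ and spherical part $\psil\, d\Ylm$, and applying $L^2(S^2)$-orthogonality of spherical harmonics together with $\int_{S^2} \abs{\nabla_{S^2} \Ylm}^2 = \ell(\ell+1)$, all cross terms drop out and
\[
\harnormB{df}^2 \;=\; \sum_{\ell \geq 0} \sum_{\abs{m} \leq \ell} \alm^2 \int_0^r \Bigl[ (\psil'(\rho))^2 \sinh^2(\rho) + \ell(\ell+1)\, \psil(\rho)^2 \Bigr]\, d\rho.
\]
Since every summand is nonnegative and only $\ell = 1$ contributes to $\abs{df_p}^2$, discarding the other terms yields
\[
\harnormB{df}^2 \;\geq\; J_1(r) \sum_{m = -1}^{1} a_{1m}^2 \;=\; \frac{J_1(r)}{K}\, \abs{df_p}^2,
\]
where $J_1(r)$ denotes the $\ell = 1$ integral. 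The lemma then reduces to the explicit identity $J_1(r)/K = \nu(r)$.

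The main obstacle is this final explicit computation. It requires solving the radial ODE at $\ell = 1$ in closed form (the regular solution is an elementary combination of $r$, $\coth(r)$, and $\csch(r)$) and then evaluating $\int_0^r \bigl[ (\psi_1'(\rho))^2 \sinh^2(\rho) + 2\, \psi_1(\rho)^2 \bigr]\, d\rho$. Repeated integration by parts using the radial ODE should reduce everything to elementary hyperbolic antiderivatives, but the normalizations of $\psi_1$ and $Y_{1m}$ must be tracked carefully so that the answer comes out to precisely $\nu(r) = \nuformula{r}$, whereupon the bound $\abs{df_p} \leq \harnormB{df}/\sqrt{\nu(r)}$ follows.
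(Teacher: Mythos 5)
Your proposal is correct and follows essentially the same route as the paper: expand $f$ in spherical harmonics times the radial solutions $\psi_\ell$, observe that only $\ell=1$ contributes to $\df_p$, use $L^2(B)$-orthogonality of the differentials $d(\psi_\ell Y_{\ell m})$ to discard the $\ell\neq 1$ terms, and reduce to the explicit $\ell=1$ radial integral. The paper handles your constant $K$ by using a rotation of $\H^3$ fixing $p$ to normalize to the single form $\omega_{1,0}$, but that is only a cosmetic difference in bookkeeping.
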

The function $\nu \maps \R_{\geq 0} \to \R_{\geq 0}$ is a
monotone increasing bijection with $\nu(r) \sim 4 \pi r^3/3$ as
$r \to 0$ and $\nu(r) \sim 6 \pi r$ as $r \to \infty$.  The estimate
in (\ref{eq:dfbound}) is sharp; in fact, our proof gives a single
harmonic function $f$ for which (\ref{eq:dfbound}) is an equality for
all $r$.  Theorem~\ref{thm:nosobolev} will follow directly from
Lemma~\ref{lem:dfsharp} when $r$ is large, but $\nu(r)$ goes to $0$
too fast as $r \to 0$ to immediately give 
(\ref{eq:supnorm}) when $r$ is small.

The missing ingredient needed to prove Theorem~\ref{thm:nosobolev} is
the following notion.  A \emph{Margulis number}
for $M = \Gamma \big\backslash \H^3$ with $\Gamma \leq \Isom^+(\H^3)$
is a $\mu > 0$ so that for all $p \in \H^3$ the subgroup
\[
\spandef{\gamma \in \Gamma}{ d\big(p, \gamma(p)\big) < \mu}
\]
is abelian.  For example, $\mu = 0.1$ is a Margulis number for any
such $M$ \cite[Theorem~2]{Meyerhoff1987}, and here we will use that
$\mu = 0.29$ is a Margulis number whenever $H^1(M) \neq 0$ by
\cite{CullerShalen2012}.  For any fixed Margulis number $\mu$, define
\[
\thickM = \setdef{m \in M}{\inj_m M \geq \mu/2}  \mtext{and} 
\thinM = \setdef{m \in M}{\inj_m M < \mu/2}
\]
When $M$ is closed, the thin part $\thinM$ is a disjoint union of
tubes about the finitely many closed geodesics of length less than
$\mu$.  We will need:
\begin{lemma}\label{lem:degree}
  Suppose $M$ is a hyperbolic \3-manifold with Margulis number $\mu$,
  and set $\epsilon = \min\left\{ \inj(M), \, \mu/2 \right\}$.
  Let $\pi \maps \H^3 \to M$ be the universal covering map.  If $B
  \subset \H^3$ is a ball of radius $\mu/2$, then 
  \begin{equation}\label{eq:degree}
  \max_{m \in M} \abs{B \cap \pi^{-1}(m)} \leq \frac{\mu}{\epsilon}
  \end{equation}
\end{lemma}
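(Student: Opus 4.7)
The plan is to lift everything to the universal cover, apply the Margulis condition to identify an abelian subgroup relating the preimages of $m$ in $B$, and then count using the lower bound on translation lengths coming from $\inj(M)$.

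First I would fix a lift $\tilde m \in \pi^{-1}(m) \cap B$ (the bound is vacuous if this intersection is empty) and enumerate the preimages as $\gamma_1(\tilde m), \ldots, \gamma_N(\tilde m)$ for distinct $\gamma_i \in \Gamma$ with $\gamma_1 = \mathrm{id}$. Since $B$ has diameter at most $\mu$, the triangle inequality gives $d(\tilde m, \gamma_i(\tilde m)) \leq \mu$ for every $i$. After passing from the non-strict to the strict inequality --- for instance by taking $B$ open and arguing by continuity, by slightly shrinking its radius, or by observing that for the manifold at hand a Margulis number remains one under a small enlargement --- the definition of a Margulis number applied at $\tilde m$ forces the subgroup $H = \langle \gamma_1, \ldots, \gamma_N \rangle \leq \Gamma$ to be abelian.

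Second, since $\Gamma$ is discrete and torsion-free and (in the setting of this paper) contains no parabolic isometries, $H$ is either trivial or infinite cyclic generated by a hyperbolic element $\alpha$. Writing $\gamma_i = \alpha^{n_i}$ for distinct integers $n_i$, let $\ell$ be the translation length of $\alpha$. If $\alpha = \beta^k$ with $\beta$ primitive, the closed geodesic in $M$ corresponding to $\beta$ has length $\ell/k \geq 2\inj(M)$, so in particular $\ell \geq 2\inj(M) \geq 2\epsilon$. The standard bound $d(q, \alpha^n q) \geq |n|\ell$ applied to $q = \tilde m$ yields
\[
|n_i - n_j|\,\ell \leq d\bigl(\gamma_i(\tilde m), \gamma_j(\tilde m)\bigr) \leq \mu,
\]
so the $N$ distinct integers $n_i$ lie in an interval of length at most $\mu/\ell \leq \mu/(2\epsilon)$, and hence $N \leq \mu/(2\epsilon) + 1$. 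Since $\epsilon \leq \mu/2$ by definition, one has $\mu/(2\epsilon) \geq 1$, which upgrades the additive $+1$ to another copy of $\mu/(2\epsilon)$ and gives $N \leq \mu/\epsilon$, as required.

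The main obstacle is the passage from the non-strict inequality produced by the closed ball $B$ to the strict one demanded by the stated definition of a Margulis number; some bookkeeping --- an open ball, a small shrinkage of the radius, or a limiting argument --- is needed to carry this out cleanly. Everything else is routine: the classification of discrete abelian subgroups of $\mathrm{Isom}^+(\H^3)$, the lower bound $d(q, \alpha^n q) \geq |n|\ell$, and the observation that the definition $\epsilon \leq \mu/2$ is exactly what one needs to absorb the off-by-one.
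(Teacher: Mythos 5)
Your proof is correct. Both arguments ultimately count powers of a cyclic element using a lower bound on translation length coming from $\inj(M)$, so the core counting step is the same; the routes to the cyclic group differ. The paper proceeds via the thin/thick decomposition: in the thick case there is at most one preimage in $B$, and in the thin case it identifies the tube $T$ about a short geodesic, notes that $B\cap\pi^{-1}(m)$ lies in a single component $\tilde T$ of $\pi^{-1}(T)$, and takes the stabilizer of $\tilde T$ as the cyclic group. You instead apply the Margulis-number condition directly at a chosen lift $\tilde m$: since all the relevant deck transformations move $\tilde m$ a distance less than $\mu$, the group they generate is abelian, hence infinite cyclic (torsion-free, no parabolics for closed $M$). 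Your version is somewhat more self-contained --- it avoids invoking the structure of the thin part and the "single component" assertion, deriving the cyclic group directly from the stated hypothesis --- while the paper's version stays closer to the geometric picture of Margulis tubes that recurs elsewhere in the argument. Two small notes: the obstacle you flag about strict vs.\ non-strict inequality dissolves once one reads $B$ as an open ball, which is what the paper does (two points of an open ball of radius $\mu/2$ are at distance strictly less than $\mu$); and your reduction from $\ell$ to the primitive root $\beta$ is a correct way to justify $\ell \geq 2\inj(M)$, since $\ell = k\,\length(\beta) \geq \length(\beta) \geq 2\inj(M)$.
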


\noindent
We first assemble the pieces and prove Theorem~\ref{thm:nosobolev}
assuming Lemmas~\ref{lem:dfsharp} and \ref{lem:degree}.

\begin{proof}[Proof of Theorem~\ref{thm:nosobolev}]
  We will assume that $H^1(M) \neq 0$ as otherwise the only
  harmonic 1-form is identically zero.  Since $H^1(M) \neq 0$,
  Theorem~1.1 of \cite{CullerShalen2012} gives that $0.29$ is a
  Margulis number for $M$.  Setting $\mu = 0.29$ and
  $\epsilon = \inj(M)$, there are two cases depending on how
  $\epsilon$ compares to $\mu/2$.

  First we do the easy case of when $\epsilon \geq \mu/2$.  Let
  $m \in M$ be a point where $\abs{\alpha_m}$ is maximal.  Take
  $\pi \maps \H^3 \to M$ to be the universal covering map and set
  $\alphatil = \pi^*(\alpha)$. Fix a ball $B \subset \H^3$ of radius
  $\epsilon$ centered at a point $p$ in $\pi^{-1}(m)$.  As $\alpha$ is
  harmonic on the compact manifold $M$, it is both closed and
  coclosed; as these are local properties, the same is true for
  $\alphatil$.  As $\H^3$ is contractible and $\alphatil$ is closed,
  we have $\alphatil = \df$ for some $f \maps \H^3 \to \R$.  Moreover
  $f$ is harmonic since
  $\Delta f = (d^*\circ d) f = d^* \alphatil = 0$.  Using
  Lemma~\ref{lem:dfsharp} we get
  \[
     \infnorm{\alpha} = \abs{\alpha_m} = \abs{\alphatil_p}
     \leq \frac{1}{\sqrt{\nu(\epsilon)}} \harnorm{\alphatil|_B} 
     \leq \frac{1}{\sqrt{\nu(\epsilon)}} \harnorm{\alpha}
  \]
  where the last inequality follows as $\pi|_B$ is injective.  The
  inequality (\ref{eq:supnorm}) now follows from the fact that
  $\sqrt{\epsilon/\nu(\epsilon)} < 3.5 < 5$ for
  $\epsilon \geq \mu/2 = 0.145$.

  Now suppose $\epsilon < \mu/2$. We take the same setup as before
  except that $B \subset \H^3$ will now have radius $\mu/2$.  By
  Lemma~\ref{lem:degree} it follows that
  \[
    \harnorm{\alphatil |_B} 
    \leq \sqrt{\frac{\mu}{\epsilon}} \harnorm{\alpha|_{\pi(B)}} 
    \leq \sqrt{\frac{\mu}{\epsilon}} \harnorm{\alpha} 
  \]
  Hence
  \[
  \infnorm{\alpha} = \abs{\alpha_m} = \abs{\alphatil_p} \leq
  \frac{1}{\sqrt{\nu(\mu/2)}} \harnorm{\alphatil|_B} \leq
  \sqrt{\frac{\mu}{\nu(\mu/2)}} \frac{1}{\sqrt{\epsilon}} \harnorm{\alpha}
  \]
  As $\sqrt{\mu/\nu(\mu/2)} \approx 4.78$ at $\mu = 0.29$, we have
  proved (\ref{eq:supnorm}) in this case as well.
\end{proof}

Turning to the lemmas, we start with the easier one which follows from
a simple geometric argument. 

\begin{proof}[Proof of Lemma~\ref{lem:degree}]
  We can assume $\epsilon = \inj(M) < \mu/2$ as otherwise $\pi|_B$ is
  injective and the result is immediate since the righthand side of
  (\ref{eq:degree}) is $2$.  The basic idea of the proof is that the
  worst-case senario is when $m$ is on a closed geodesic $C$ of
  minimal length $2 \epsilon$, and $B$ is centered at a point of
  $\pi^{-1}(C)$. Then $B$ can contain $n + 1$ points in $\pi^{-1}(C)$
  where $n = \lfloor \mu/ 2\epsilon \rfloor$; using that
  $n+1 \leq \mu/\epsilon$ then gives (\ref{eq:degree}).  We now give
  the detailed proof.

  If $m \in \thickM$, then any pair of distinct points in
  $\pi^{-1}(m)$ are distance at least $\mu$ apart, and hence at most
  one is in the open ball $B$; as $\mu/\epsilon \geq 2$, we have
  proven  (\ref{eq:degree}) in this case. 

  If $m \in \thinM$, it lies in some tube $T$ about a short closed
  geodesic $C$.  The components of $\pi^{-1}(T)$ are radius $R$
  neighborhoods about the various geodesic lines in $\pi^{-1}(C)$.
  First, note that $B \cap \pi^{-1}(M)$ must lie in a single component
  $\Ttil$ of $\pi^{-1}(T)$; let $\gamma \in \Gamma$ generate the
  stabilizer of $\Ttil$.  Pick a $\mtil_0 \in \Ttil \cap \pi^{-1}(m)$;
  then $\pi^{-1}(m)$ consists of $\gamma^n \cdot \mtil_0$ for
  $n \in \Z$.  Adjust $\mtil_0$ if necessary so that $\mtil_0 \in B$
  and any $\mtil_n = \gamma^n \cdot \mtil_0$ in $B$ has $n \geq 0$.
  Since
  $d(\mtil_0, \mtil_n) \geq n \cdot \mathrm{len}(C) \geq 2 n
  \epsilon$,
  if $\mtil_n \in B$ we have $n \leq \mu / 2\epsilon$.  So there are
  at most $(\mu / 2\epsilon) + 1$ of the $\mtil_i$ in $B$.  Since
  $2 \epsilon \leq \mu$, we get $\abs{B \cap \pi^{-1}(m)} \leq
  \mu/\epsilon$ as desired. 
\end{proof}

While the calculations in the proof of Lemma~\ref{lem:dfsharp} are
somewhat involved (unsurprisingly given the formula for $\nu(r)$), the
basic idea is simple and we sketch it now.  Using the natural series
expansion for harmonic functions about $p$, we show that after a
suitable isometry of $\H^3$ fixing $p$ we have
\[
\df = a \omega + \beta
\]
where $a \in \R_{\geq 0}$, the 1-form $\omega$ is a fixed and
independent of $f$ with $\abs{\omega_p} = 1$, the 1-form $\beta$
vanishes at $0$, and $\beta$ is orthogonal to $\omega$ in
$L^2\big(B_r(p)\big)$ for all $r$.  Then $\df_p = a \omega_p$ and for
each such $B = B_r(p)$ the orthogonality of $\omega$ and $\beta$
implies $a \harnormB{\omega} \leq \harnormB{\df}$.
Hence 
\[
\abs{\df_p} = \abs{a \omega_p} = 
a \leq \frac{\harnormB{\df}}{\harnormB{\omega}}
\]
Thus we simply define $\nu(r)$ to be $\norm{\omega}_{L^2(B_r(p))}^2$
and Lemma~\ref{lem:dfsharp} will then follow by calculating $\nu(r)$
explicitly.

\subsection{Series expansions of harmonic functions}  

We now describe in detail the key tool used to prove
Lemma~\ref{lem:dfsharp}: that every harmonic function
$f \maps \H^3 \to \R$ has a series expansion in terms of a certain
basis $\{ \psilm \}$ of harmonic functions centered around $p$;
throughout, see \cite{Minemura1973} or \cite[\S
3.5]{ElstrodtGrunewaldMennicke1998} for details.  Consider the
spherical coordinates
$(r, \phi, \theta) \in [0, \infty) \times [0, \pi) \times [0, 2 \pi)$
on $\H^3$ centered about $p$; in these coordinates, the metric is:
\[
 \ds^2_{\H^3} = \dr^2 + \sinh^2(r) \ds^2_{S^2} = \dr^2 + \sinh^2(r)
 \left( \dphi^2 + \sin^2(\phi) \dtheta^2 \right)
\]
For $\ell \in \Z_{\geq 0}$, define $\psil \maps \R_{\geq 0} \to
\R_{\geq 0}$ by 
\[
\psil(r) = \frac{\Gamma\left(\frac{3}{2}\right)\Gamma(\ell +2)}{\Gamma\left(\ell +
  \frac{3}{2}\right)} \tanh^{\ell} \left( \frac{r}{2} \right) \cdot \, 
\twoFone{-\frac{1}{2}}{\ell}{\ell + \frac{3}{2}}{\tanh^2
  \left(\frac{r}{2} \right)}
\]
where ${}_2 F_1$ is the usual hypergeometric function and $\Psi_0$ is
simply the constant function $1$.  If $\Ylm(\phi, \theta)$ for
$\ell \geq 0$ and $-\ell \leq m \leq \ell$ are the usual basis for the
\emph{real} spherical harmonics on $S^2$, define:
\[
\psilm = \psil(r) \Ylm(\phi, \theta)
\]
These are harmonic functions on all of $\H^3$, and moreover every
harmonic function $f \maps \H^3 \to \R$ has unique 
$\alm \in \R$ so that
\begin{equation}\label{eq:series}
  f = \sum_{\ell = 0}^\infty \sum_{m = -\ell}^{\ell} \alm \psilm 
\end{equation}
where the series converges absolutely and uniformly on compact
subsets of $\H^3$.  We will use the following elementary properties of
these functions:

\begin{lemma}\label{lem:psilm}
  \leavevmode
  \begin{enumerate}
  \item \label{item:ordervanish} $\psil(r)$ vanishes to order exactly
    $\ell$ at $r = 0$. Consequently, $\psilm$ vanishes to order
    exactly $\ell - 1$ at $p$.

  \item On any ball $B$ about $p$, the functions $\psilm$ are
    orthogonal in $L^2(B)$.

  \item The 1-forms $\omegalm = d \psilm$ are also orthogonal in
    each $\Omega^1(B)$.  
       
  \end{enumerate}
\end{lemma}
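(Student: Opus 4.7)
The plan is to exploit the product structure $\psilm = \psil(r)\Ylm(\phi,\theta)$ together with standard $L^2$-orthonormality of the real spherical harmonics on $S^2$. Part (1) reduces to reading off the leading behavior of the hypergeometric factor defining $\psil$ at the origin, while parts (2) and (3) follow, after separation of variables on a radial ball $B = B_R(p)$, by isolating a radial integral against an angular integral over $S^2$ which enforces the Kronecker-$\delta$ vanishing.

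For Part (1), I would substitute $r = 0$ into the definition of $\psil$. Since $\tanh^\ell(r/2) \sim (r/2)^\ell$ as $r \to 0$ and $\twoFone{-1/2}{\ell}{\ell + 3/2}{0} = 1$, the radial function $\psil(r)$ has a zero of exact order $\ell$ at $r = 0$. To pass from this to a statement about $\psilm$ at $p$, I would work in a Cartesian chart about $p$: the product $r^\ell \Ylm(\phi,\theta)$ is the restriction to the sphere of radius $r$ of a homogeneous harmonic polynomial of degree $\ell$ in Cartesian coordinates, so $\psilm$ extends smoothly through $p$ with the claimed vanishing behavior.

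For Part (2), in spherical coordinates about $p$ the hyperbolic volume form factors as $\dVol = \sinh^2(r)\sin(\phi)\,\dr\,\dphi\,\dtheta$. Fubini then yields
\[
\int_B \psilm \cdot \Psi_{\ell' m'}\,\dVol = \left(\int_0^R \psil(r)\,\psi_{\ell'}(r)\,\sinh^2(r)\,\dr\right)\!\left(\int_{S^2} \Ylm\, Y_{\ell' m'}\, dS^2\right),
\]
and the angular factor vanishes unless $(\ell,m) = (\ell',m')$ by the orthonormality of the real spherical harmonics on $S^2$.

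Part (3) is the most delicate step; my preferred route uses that each $\psilm$ is harmonic on all of $\H^3$. Stokes' theorem on the round ball $B = B_R(p)$ gives
\[
\int_B d\psilm \wedge *\,d\Psi_{\ell' m'} = \int_{\partial B} \psilm \cdot \bigl(*\,d\Psi_{\ell' m'}\bigr)\big|_{\partial B},
\]
since $d*d\Psi_{\ell' m'} = 0$ in the interior. On the boundary sphere only the radial component $\psi_{\ell'}'(R)\, Y_{\ell' m'}$ of $d\Psi_{\ell' m'}$ contributes to the pullback of $*\,d\Psi_{\ell' m'}$, multiplied by the induced hyperbolic area form $\sinh^2(R)\sin(\phi)\,\dphi\,\dtheta$. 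The integrand thus factors as a function of $R$ alone times the angular product $\Ylm\, Y_{\ell' m'}$, whose integral over $S^2$ again vanishes unless $(\ell,m) = (\ell',m')$. The main bookkeeping hazard is writing the Hodge star in spherical coordinates correctly and verifying the radial-component reduction on $\partial B$; this is routine but is the only place where a sign or numerical factor could slip.
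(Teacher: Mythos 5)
Parts (1) and (2) of your proposal follow the paper's argument essentially verbatim: read off the vanishing order from $\tanh^\ell(r/2)$ and $\twoFone{a}{b}{c}{0}=1$, and for (2) apply separation of variables and the $L^2(S^2)$-orthonormality of the $\Ylm$. Your extra remark about $r^\ell\Ylm$ extending smoothly through $p$ as a harmonic polynomial is a useful clarification but not a different argument.

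Part (3) is where you take a genuinely different and, in my view, slightly cleaner route. The paper expands $\omegalm \wedge *\omega_{kn}$ explicitly, observes that the $\dr \otimes d\Ylm$ cross-terms vanish, and then integrates each remaining term over each sphere $\{r = \mathrm{const}\}$; orthogonality of the angular factor in the second term is obtained by integration by parts on $S^2$, i.e.\ by using that $Y_{kn}$ is a $\Delta_{S^2}$-eigenfunction. You instead invoke harmonicity of $\Psi_{\ell'm'}$ on $\H^3$ to get $d*\,d\Psi_{\ell'm'}=0$, apply Stokes on $B_R(p)$ to convert $\int_B d\psilm\wedge *\,d\Psi_{\ell'm'}$ into the boundary term $\int_{\partial B}\psilm\,*\,d\Psi_{\ell'm'}$, and note that only the radial component of $d\Psi_{\ell'm'}$ survives the pullback to $\partial B$, so the integral factors as $\psil(R)\,\psi_{\ell'}'(R)\,\sinh^2(R)\int_{S^2}\Ylm Y_{\ell'm'}$. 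This avoids writing the Hodge star of $dY_{kn}$ on $S^2$ and the eigenvalue computation altogether; it buys a more conceptual proof at the cost of a one-time verification that $\left.*\,d\Psi_{\ell'm'}\right|_{\partial B}$ is purely radial, which is routine since any 2-form containing a $\dr$ factor pulls back to zero on $\{r=R\}$. Both arguments are correct and give orthogonality on every ball about $p$.
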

\begin{proof}
  The claim (a) follows since
  $\tanh \frac{r}{2} = \frac{1}{2} r + O(r^2)$ for small $r$ and in
  addition $\twoFone{a}{b}{c}{0} = 1$.  The second claim (b) is an
  easy consequence of the fact that the $\Ylm$ are orthonormal as
  elements of $L^2\left(S^2\right)$.  For part (c), we have
  \[
  \omegalm = d\left(\psil \Ylm\right) = \Ylm \frac{\partial
    \psil}{\partial r} \dr + \psil d\Ylm
  \]
  and then observing that the cross-terms vanish we get
  \begin{equation}\label{eq:omegawedge}
  \omegalm \wedge *\omega_{k n} = \Ylm Y_{k n} \frac{\partial
    \psil}{\partial r} \frac{\partial
    \psi_k}{\partial r} \dVol + \psil \psi_k d\Ylm \wedge * dY_{kn}.
  \end{equation}
  To compute $\pair{\omegalm, \ \omega_{k n}}$, we integrate the above
  over $B$ and show it vanishes unless $(\ell, m) = (k, n)$.  In fact,
  we argue that the integral of (\ref{eq:omegawedge}) over each $S^2$
  where $r$ is fixed is $0$.  For the first term on the right-hand
  side of (\ref{eq:omegawedge}) this follows immediately from the
  orthogonality of the $\Ylm$ in $L^2\left(S^2\right)$.  For the
  second term, note that
  $* dY_{kn} = \left(\starbar dY_{kn} \right)\wedge \dr$ where
  $\starbar$ is the Hodge star operator on $S^2$, and hence the real
  claim is that $\pair{d\Ylm, dY_{kn}}_{S^2}$ vanishes. This can be
  deduced from the orthogonality of $\Ylm$ and $Y_{kn}$ via
  \[
    \pair{d\Ylm, dY_{kn}}_{S^2} =  \pair{\Ylm,  d^{\starbar} d
    Y_{kn}}_{S^2}  = \pair{\Ylm,  \Delta_{S^2} Y_{kn}}_{S^2} 
  = k(k+1) \pair{\Ylm,  Y_{kn}}_{S^2} 
    \]
    where the last equality holds because $Y_{kn}$ is an eigenfunction of
    $\Delta_{S^2}$.  This finishes the proof of (c).  
\end{proof}

We now prove Lemma~\ref{lem:dfsharp} using the approach
sketched earlier.

\begin{proof}[Proof of Lemma~\ref{lem:dfsharp}]
  Let the $\alm$ be the coefficients in the expansion
  (\ref{eq:series}) for $f$, and note we get a corresponding
  expansion
  \[
  \df = \sum_{\ell = 1}^\infty \sum_{m = -\ell}^{\ell} \alm \omegalm
  \mtext{where $\omegalm = d \psilm$.}
  \]
  Here the series converges absolutely and uniformly on $B$, and
  henceforth we view $B$ as the domain of all our 1-forms.  Defining
  \[
  \eta = a_{1,-1} \omega_{1,-1} + a_{1,0} \omega_{1,0} + a_{1,1}
  \omega_{1,1}
  \]
  we see by Lemma~\ref{lem:psilm}(\ref{item:ordervanish}) that
  $\eta_p = \df_p$.  Because of the orthogonality of the $\omegalm$ on
  $B$, we know $\harnorm{\eta} \leq \harnorm{\df}$ and so it suffices
  to prove (\ref{eq:dfbound}) for $\eta$, or indeed for the components of
  $\eta$.  In fact, because we can use an isometry of $\H^3$ fixing
  $0$ to interchange the $Y_{1,m}$, it suffices to establish
  (\ref{eq:dfbound}) for the single form $\omega_{1,0}$, or indeed any
  multiple of it. Thus Lemma~\ref{lem:dfsharp} will follow immediately
  from the next result.  
\end{proof}

\begin{lemma}
  For the harmonic 1-form $\omega = \sqrt{3 \pi} \cdot \omega_{1,0}$
  we have $\abs{\omega_p} = 1$ and $\norm{\omega}_{L^2(B_r(p))} =
  \nu(r)$.  
\end{lemma}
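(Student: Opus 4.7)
My plan is to work out $\omega = \sqrt{3\pi}\,d\Psi_{1,0}$ completely explicitly in hyperbolic polar coordinates $(r,\phi,\theta)$ centered at $p$ and then evaluate the two quantities via a single integration by parts. The first step is to replace the hypergeometric expression for $\psi_1$ by the closed form
\[
\psi_1(r) = \coth(r) - r\csch^2(r),
\]
which one verifies as follows: harmonicity of $\Psi_{1,0}=\psi_1(r)Y_{1,0}(\phi,\theta)$ reduces, via separation of variables and $\Delta_{S^2}Y_{1,0}=2Y_{1,0}$, to the radial ODE $(\sinh^2(r)\psi_1'(r))' = 2\psi_1(r)$; the expression above solves this equation, is regular at $r=0$, and matches the leading behavior $\psi_1(r)=\tfrac{2}{3}r+O(r^3)$ dictated by the hypergeometric definition of $\psil$. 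Combined with the standard real spherical harmonic $Y_{1,0}(\phi,\theta)=\sqrt{3/(4\pi)}\cos\phi$, this pins down $\omega$.

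For the pointwise claim $\abs{\omega_p}=1$, I would Taylor expand near $p$:
\[
\sqrt{3\pi}\,\Psi_{1,0} = \sqrt{3\pi}\cdot\sqrt{\tfrac{3}{4\pi}}\cdot\tfrac{2}{3}\,r\cos\phi + O(r^3) = r\cos\phi + O(r^3).
\]
Since $(r\cos\phi,\,r\sin\phi\cos\theta,\,r\sin\phi\sin\theta)$ are Riemannian normal coordinates at $p$, the hyperbolic metric agrees with the Euclidean metric to first order there, so the differential $d(r\cos\phi)$ has unit length at $p$. Hence $\abs{\omega_p}=1$.

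For the $L^2$-integral, the hyperbolic metric in polar coordinates decomposes
\[
\abs{d\Psi_{1,0}}^2 = \psi_1'(r)^2\,Y_{1,0}^2 + \frac{\psi_1(r)^2}{\sinh^2(r)}\,\abs{d_{S^2}Y_{1,0}}^2.
\]
Integrating over the sphere of radius $r$ with area form $\sinh^2(r)\sin\phi\,\dphi\,\dtheta$, and using the identities $\int_{S^2}Y_{1,0}^2\,\dA=1$ and $\int_{S^2}\abs{d_{S^2}Y_{1,0}}^2\,\dA=2$ (the latter coming from $\Delta_{S^2}Y_{1,0}=2Y_{1,0}$ and integration by parts on $S^2$), yields
\[
\norm{\omega}_{L^2(B_r(p))}^2 = 3\pi\int_0^r\bigl(\sinh^2(r')\psi_1'(r')^2+2\psi_1(r')^2\bigr)\,dr'.
\]
The conceptual heart of the proof is that the radial harmonicity identity lets this integral telescope: integrating $\sinh^2(r')\psi_1'(r')^2$ by parts using $(\sinh^2(r')\psi_1'(r'))'=2\psi_1(r')$ exactly cancels the $2\psi_1^2$ contribution, and the boundary term at $r'=0$ vanishes since $\psi_1$ vanishes linearly there. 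Consequently
\[
\norm{\omega}_{L^2(B_r(p))}^2 = 3\pi\sinh^2(r)\psi_1'(r)\psi_1(r).
\]
Substituting $\psi_1(r)=\coth(r)-r\csch^2(r)$ and $\psi_1'(r)=2\csch^2(r)(r\coth(r)-1)$, and simplifying via $\coth^2=1+\csch^2$, should then reproduce $\nu(r)$ on the nose. The only real obstacle is algebraic bookkeeping in this final simplification; the structural insight that does all the work is the integration-by-parts identification made possible by harmonicity.
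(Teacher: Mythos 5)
Your proof is correct, and for the $L^2$ computation it takes a genuinely different and cleaner route than the paper. The paper obtains the closed form $\psi_1(r) = \coth r - r\csch^2 r$ by applying contiguous relations to the hypergeometric expression, whereas you derive it from the radial ODE $(\sinh^2 r\,\psi_1')' = 2\psi_1$ (which follows from separation of variables) together with regularity at $r=0$ and the leading coefficient $\tfrac{2}{3}$, which one reads off from the paper's definition of $\psi_1$; both identifications are valid. The two computations of $\abs{\omega_p}$ are essentially the same (the paper takes the limit of $\tfrac{3}{2}\partial_r\psi_1$ along $\phi=0$, you phrase it via normal coordinates), and both reduce the $L^2$-norm to $3\pi\int_0^r\bigl(\sinh^2(s)\psi_1'(s)^2 + 2\psi_1(s)^2\bigr)\,ds$. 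Where you genuinely diverge is the evaluation of this integral: the paper expands the integrand into explicit combinations of $\coth$ and $\csch$ and integrates term by term, whereas you observe that harmonicity makes $\sinh^2\psi_1'^2 + 2\psi_1^2 = \tfrac{d}{ds}\bigl(\sinh^2(s)\psi_1'(s)\psi_1(s)\bigr)$, so the integral telescopes to $\sinh^2(r)\psi_1'(r)\psi_1(r)$, and one checks directly (using $\coth^2 = 1 + \csch^2$) that $3\pi\sinh^2(r)\psi_1'(r)\psi_1(r) = \nu(r)$. Your telescoping argument is conceptually more illuminating and dispenses with the most laborious algebra; the paper's is more direct but amounts to a brute-force expansion best done by a computer algebra system. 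One small caveat: the lemma as printed says $\norm{\omega}_{L^2(B_r(p))} = \nu(r)$, but the surrounding text and the proof both make clear that $\nu(r)$ is meant to be the \emph{square} of that norm, and that is what you compute, so your result agrees with the intended statement.
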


\begin{proof}
  The $\psil$ actually have alternate expressions in terms of
  elementary functions; in the case of interest, using that
  \[
  \twoFone{1/2}{1}{3/2}{x^2} = \sum_{n=0}^\infty \frac{(1/2)_n
    (1)_n}{(3/2)_n} \frac{x^{2n}}{n!} = 
     \sum_{n = 0}^\infty \frac{x^{2n}}{2n+1} =
  \frac{\arctanh(x^2)}{x} 
  \]
  and applying two contiguous relations for hypergeometric functions
  yields
  \[
  \twoFone{-1/2}{1}{5/2}{x^2} = 
      \frac{3 \left(x^3-\left(x^2-1\right)^2 \arctanh(x)+x\right)}{8 x^3}
  \]
  and hence 
  \[
  \psi_1(r) = \coth(r) - r \csch^2(r) = \frac{\sinh(r)\cosh(r) -
    r}{\sinh^2(r)} = \frac{2}{3} r -\frac{4}{45} r^3 + O(r^5).
  \]
  As $Y_{1,0}$ is $\sqrt{3/4\pi}\cos{\phi}$, writing 
  \[
  \omega =   \sqrt{3 \pi} \cdot \omega_{1,0} = \frac{3}{2} d\left( \psi_1
    \cos \phi \right)
  \] 
  in terms of the orthonormal coframe
  \[
  \drhat = \dr \qquad \dphihat = \sinh(r) \dphi \qquad \dthetahat = \sinh(r)
  \sin(\phi) \dtheta 
  \]
  gives
  \[
  \omega = \frac{3}{2} \left( \left(\partial_r \psi_1\right)
    \cos(\phi) \drhat
    - \frac{\psi_1 \sin(\phi)}{\sinh(r)} \dphihat \right)
    \mtext{where $\partial_r \psi_1 = 
    \displaystyle \frac{\partial \psi_1}{\partial r}
    = 2 \cdot \frac{r\coth(r) -1}{\sinh^2(r)}$,}
  \]
  and hence
  \[
  \abs{\omega}^2 = \frac{9}{4} \left( \left(\partial_r \psi_1\right)^2
    \cos^2(\phi)  + \frac{\psi_1^2 \sin^2 \phi}{\sinh^2(r)} \right)
  \]
  Approaching the origin along the ray $\phi=0$ gives:
  \[
  \abs{\omega_p} = \frac{3}{2} \lim_{r \to 0} \frac{\partial
    \psi_1}{\partial r} = 3 \lim_{r \to 0} \frac{r
    \coth(r) -1}{\sinh^2(r)} = 1
  \]
  Computing the $L^2$-norm of $\omega$ on $B$ gives
  \begin{equation}
    \begin{split}
  \harnorm{\omega}^2 &= \int_B \abs{\omega}^2 \dVol =
      \int_0^{R} \int_0^{\pi} \int_0^{2 \pi} \abs{\omega}^2 \sinh^2(r)
      \sin(\phi) \ \dtheta \dphi \dr\\
      &= \frac{9 \pi}{2} \int_0^{R} \int_0^{\pi} 
\left(\partial_r \psi_1\right)^2 \sinh^2(r) \cos^2(\phi) \sin(\phi) + \psi_1^2
      \sin^3(\phi) \ \dphi \dr \\
      &= 3 \pi \int_0^{R} 
      \left(\partial_r \psi_1\right)^2 \sinh^2(r) + 2
      \psi_1^2 \ \dr \\
      &=  6 \pi \int_0^{R} \coth ^2(r)+2\csch^2(r) -6 r \coth (r)
        \csch^2(r) \\
        & \hspace{4cm}+  r^2 \csch^2(r) \left(2 \coth^2(r)+\csch^2(r)\right)\dr  \\
      &=  \nuformula{R}
      \end{split}
  \end{equation}
  which proves the lemma.
\end{proof}

\section{Proof of Theorem~\ref{thm:main}}

This section is devoted to the proof of 

\thmmain*

\begin{proof}[Proof of Theorem~\ref{thm:main}]
  We start with the lower bound in (\ref{eq:main}), where we use the
  two guises of the least-area/$L^1$--norm to mediate between the
  Thurston and harmonic norms and thereby reduce the claim to the
  Cauchy-Schwarz inequality.  Suppose $\phi \in H^1(M)$ and let
  $\alpha$ be the harmonic 1-form representing $\phi$.  By
  Theorem~\ref{thm:areavsthurston} and Lemma~\ref{lem:areavsone} we
  have
  \begin{equation*}
      \pi \thnorm{\phi} \leq \areanorm{\phi} = \onenorm{\phi} 
  \end{equation*}
  From the definition of the $L^1$-norm, we have
  $\onenorm{\phi} \leq \onenorm{\alpha}$, and applying Cauchy-Schwarz
  to the pair $\abs{\alpha} \maps M \to \R$ and the constant function
  1 gives
  \begin{equation}\label{eq:cs}
    \pi \thnorm{\phi} \leq \onenorm{\alpha} 
    =  \onenorm{ \abs{\alpha} \cdot 1 } \leq \harnorm{ \alpha }  \harnorm{
      1 } = \harnorm{ \alpha } \sqrt{\vol(M)}
  \end{equation}
  Since $\harnorm{\phi} = \harnorm{\alpha}$ by definition, dividing
  (\ref{eq:cs}) through by $\sqrt{\vol(M)}$ gives 
  the first part of (\ref{eq:main}).

  The proof of the upper-bound in (\ref{eq:main}) is essentially the
  same as given in \cite{BergeronSengunVenkatesh2016} for the
  corresponding part of (\ref{eq:norms}), but using the upgraded
  Theorem~\ref{thm:nosobolev} to relate $\infnorm{\cdotspaced}$ and
  $\harnorm{\cdotspaced}$ and so give a sharper result.  By continuity
  of the norms, it suffices to prove the upper bound for
  $\phi \in H^1(M; \Z)$.  Using Theorem~\ref{thm:areavsthurston}, fix
  a surface $S$ dual to $\phi$ of area at most $2 \pi \thnorm{\phi}$;
  by definition, this means that for every \emph{closed} 2-form
  $\beta$ on $M$ one has $\int_M \beta \wedge \alpha = \int_S \beta$.
  If $\alpha$ is the harmonic representative of $\phi$, then
  $d^* \alpha = -*d*\alpha = 0$, and so it follows that $*\alpha$ is
  closed.  Hence
  \begin{equation}
    \begin{split}
      \harnorm{\alpha}^2 &= \int_M \alpha \wedge
      *\alpha = \int_M *\alpha \wedge \alpha  = \int_S *\alpha \\
      &\leq \int_S \abs{*\alpha} \ \dA  =  \int_S
      \abs{\alpha} \ \dA \leq \int_S \infnorm{\alpha} \, \dA  \\
      &\leq \infnorm{\alpha} \Area(S) \leq  2 \pi \infnorm{\alpha} \thnorm{\phi}.
    \end{split}
  \end{equation}
  Applying (\ref{eq:supnorm}) from Theorem~\ref{thm:nosobolev}, we get 
  \[
  \harnorm{\alpha}^2 \leq \frac{10 \pi}{\sqrt{\inj(M)}} \harnorm{\alpha}
  \thnorm{\phi}
  \]
  Dividing through by $\harnorm{\alpha}$ gives the upper bound in 
  (\ref{eq:main}), proving the theorem. 
\end{proof}

\subsection{The gauge theory viewpoint}\label{sec:gauge}

As mentioned in the introduction, Kronheimer and Mrowka found a
striking relationship between the Thurston norm and harmonic norms as
one varies the Riemannian metric.  Specifically, if $h$ is a
Riemannian metric on $M$, let $\hnorm{\cdotspaced}$ denote the induced
harmonic norm on $H^1(M; \R)$.  They proved:

\begin{theorem}[{\cite{KronheimerMrowka1997b}}]\label{thm:KM}
  Suppose $M$ is a closed oriented irreducible \3-manifold.  Then
  for all $\phi \in H^1(M;\R)$ one has
  \begin{equation}\label{eq:KM}
    4\pi \thnorm{\phi} = \inf \setdef{\ \hnorm{s_h} \cdot \hnorm{\phi} \  }{\mbox{$h$ is
      a Riemannian metric on $M$}}
  \end{equation}
  where $s_h$ is the scalar curvature of $h$.  
\end{theorem}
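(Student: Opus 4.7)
The plan is to prove Theorem~\ref{thm:KM} via Seiberg-Witten gauge theory, reducing to the four-manifold $X = M \times S^1$ and appealing to the Thurston-norm detection theorem for SW invariants. The central input is that when $\thnorm{\phi} > 0$ there is a Spin$^c$ structure $\mathfrak{s}$ on $M$ whose pullback to $X$ has nontrivial Seiberg-Witten invariant and whose first Chern class satisfies $2\thnorm{\phi} = \abs{\pair{c_1(\mathfrak{s}), \phi}}$; this identification, the main technical result of Kronheimer-Mrowka, I would quote rather than reprove.

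For the inequality $4\pi \thnorm{\phi} \leq \hnorm{s_h} \cdot \hnorm{\phi}$ valid for every metric $h$, I would equip $X$ with the product of $h$ and a circle of length $L$, so that the scalar curvature of $X$ equals $s_h$ pulled back. Let $\alpha$ be the $h$-harmonic representative of $\phi$, so $\hnorm{\alpha} = \hnorm{\phi}$. Non-vanishing of the SW invariant of $\mathfrak{s}$ yields an $S^1$-invariant solution $(A, \Psi)$ of the four-dimensional SW equations on $X$. The Weitzenböck identity combined with the spinor equation forces the pointwise estimate $\abs{F_A^+} \leq \tfrac{1}{2\sqrt{2}} \max(-s_h,0)$, and $c_1(\mathfrak{s})$ is represented in de Rham cohomology by $\tfrac{i}{2\pi} F_A$. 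Expressing $\pair{c_1(\mathfrak{s}), \phi}$ as an integral of $\tfrac{i}{2\pi} F_A^+ \wedge (\alpha \wedge \dtheta)$ over $X$ and combining a pointwise bound with $L^2$-Cauchy-Schwarz, one divides by $L$ to obtain the inequality with the sharp constant $4\pi$.

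For the reverse inequality $4\pi \thnorm{\phi} \geq \inf_h \hnorm{s_h} \cdot \hnorm{\phi}$, the plan is to construct a family of metrics $h_n$ on $M$ whose product $\hnorm{s_{h_n}} \cdot \hnorm{\phi}$ converges to $4\pi \thnorm{\phi}$. Start from an embedded Thurston-norm minimizing surface $S$ dual to $\phi$, necessarily incompressible by irreducibility of $M$, endowed with a hyperbolic metric so that $\int_S \abs{K_S} \, \dA = 2\pi \thnorm{\phi}$ via Gauss-Bonnet. On a tubular neighborhood $N \cong S \times (-T_n, T_n)$ place a warped-product metric tapered smoothly into a fixed ambient metric outside $N$, arranging that as $T_n \to \infty$ the scalar curvature $L^2$-mass localizes on $S$ with the correct normalization and the harmonic representative $\alpha_n$ of $\phi$ becomes a narrow dual pulse concentrated in the interval direction. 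A Cauchy-Schwarz saturation in the limit then recovers the factor $4\pi$.

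The main obstacle is the Seiberg-Witten detection of the Thurston norm: identifying $2\thnorm{\phi}$ with the maximum of $\abs{\pair{c_1(\mathfrak{s}), \phi}}$ over SW basic classes $\mathfrak{s}$ is the principal technical content of Kronheimer-Mrowka and rests on gluing and vanishing arguments for four-dimensional SW invariants together with a careful analysis of Spin$^c$ structures adapted to norm-minimizing surfaces. Without this input, the gauge-theoretic argument above only gives the inequality for those classes actually detected by SW invariants; it is the \emph{completeness} of the SW detection which makes the resulting variational characterization sharp, and controlling the leakage of scalar-curvature $L^2$-mass outside $N$ in the construction of near-extremal metrics is the delicate companion issue on the other side.
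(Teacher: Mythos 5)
The paper does not actually reprove this theorem: it observes that the stated formula is logically \emph{equivalent} to Theorem~2 of \cite{KronheimerMrowka1997b}, which is phrased in terms of the dual Thurston norm on $H^2(M;\R)$ as $\dualthnorm{\psi} = 4\pi \sup_h \hnorm{\psi_h}/\hnorm{s_h}$, and then deduces the $H^1$-version by a short linear-algebra argument: the $L^2$ harmonic norms on $H^1$ and $H^2$ are mutually dual (via the Hodge star and the wedge pairing), dualizing $\lambda\,\hnorm{\cdotspaced}$ gives $\lambda^{-1}\hnorm{\cdotspaced}$, and if $x_n$ are norms with $x = \sup x_n$ finite then $x^* = \inf x_n^*$. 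That is the entirety of the paper's proof; it cites the hard content.

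Your proposal goes in a genuinely different direction: you sketch a proof of the underlying Kronheimer--Mrowka theorem itself, using the Seiberg--Witten equations on $X = M \times S^1$, the Weitzenb\"ock estimate $\abs{F_A^+} \le \tfrac{1}{2\sqrt 2}\max(-s_h,0)$, and the SW detection of the Thurston norm for one inequality, and a construction of near-extremal warped-product metrics concentrated on a taut surface for the other. In broad outline this is indeed how \cite{KronheimerMrowka1997b} is proved, but you have re-derived the cited theorem rather than the reformulation the paper is after. Note a tension in your plan: you declare you will quote the SW detection theorem (``the main technical result of Kronheimer--Mrowka'') rather than reprove it, but once you are willing to cite that, it is strictly more economical to cite their Theorem~2 directly and run the norm-duality argument the paper uses, rather than redo the pointwise curvature estimate and the near-extremal-metric construction. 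Your sketch of the reverse inequality --- the warped-product metrics with scalar-curvature $L^2$-mass ``localizing'' on $S$ and the harmonic form becoming a ``narrow dual pulse,'' saturating Cauchy--Schwarz in the limit --- is stated at too high a level to assess; making it precise (in particular controlling $\hnorm{s_{h_n}}$ and $\hnorm{\phi_{h_n}}$ simultaneously while tapering into a fixed ambient metric) is exactly the delicate analysis KM carry out, and is not a small gap. If your goal is only the theorem as stated, the duality route is the intended short proof; if your goal is to reprove KM, your outline is reasonable but needs the metric-construction step filled in.

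Two small technical cautions in your easy-direction sketch: on $X = M\times S^1$ with the product metric, the $2$-form $\alpha\wedge\dtheta$ is not self-dual by itself --- its self-dual projection is $\tfrac12(\alpha\wedge\dtheta + *_M\alpha)$ --- so the pairing with $F_A^+$ needs the self-dual representative to recover the cohomological number $\pair{c_1(\mathfrak{s}),\phi}$; and the identification of $4$-dimensional $S^1$-invariant SW solutions with $3$-dimensional monopoles should be made explicit, since the scalar-curvature bound you want is really a statement on $M$. Neither is a fatal issue, but both would need care in a complete write-up.
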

Theorem~\ref{thm:KM} above is equivalent to Theorem 2 of
\cite{KronheimerMrowka1997b}, as we explain below.  Kronheimer and
Mrowka pointed out to us that, in this form, specializing
(\ref{eq:KM}) to a hyperbolic metric $h$ on $M$ gives
\[
\frac{2 \pi}{3 \sqrt{\vol(M)}} \thnorm{\cdotspaced}\leq \harnorm{\cdotspaced}
\]
since the scalar curvature of a hyperbolic metric is $-6$; this is
only slightly weaker than the first inequality of
(\ref{eq:main}).

Theorem~\ref{thm:KM} gives another perspective on
Theorem~\ref{thm:main}, namely that (\ref{eq:main}) bounds, from above
and below, how close the hyperbolic metric can be to realizing the
infimum in (\ref{eq:KM}).  Theorem~\ref{thm:KM} is a corollary of
previous deep work of Kronheimer and Mrowka
\cite{KronheimerMrowka1997a} which in turn depends on results of Gabai
\cite{Gabai1983}, Eliashberg-Thurston \cite{EliashbergThurston1998},
and Taubes \cite{Taubes1996}.  In contrast, our proof here of the
first inequality of (\ref{eq:main}) uses only some basic facts about
minimal surfaces.  It would be very interesting to try to extend our
approach to arbitrary metrics and prove results along the lines of
Theorem~\ref{thm:KM}; if successful, this would provide a new
perspective on their results.

\begin{proof}[Proof of Theorem~\ref{thm:KM}] Here is why
  Theorem~\ref{thm:KM} is equivalent to Theorem 2 of
  \cite{KronheimerMrowka1997b}, which is stated in terms of the
  \emph{dual} Thurston norm on $H^2(M; \R)$.
  Recall that for a norm $x$ on a
  finite-dimensional real vector space $V$, the dual norm $x^*$ on
  $V^*$ is defined by
  \[
  x^*(\psi) = \sup \setdef{ \psi(v) }{\mbox{$v \in V$ with $x(v) \leq 1$}}
  \]
  Theorem 2 of
  \cite{KronheimerMrowka1997b} says that for all $\psi \in H^2(M; \R)$
  one has
  \begin{equation}\label{eq:KMorig}
    \dualthnorm{\psi} = 4 \pi \sup_h \frac{\hnorm{\psi_h}}{\hnorm{s_h}}
  \end{equation}
  where $\dualthnorm{\cdotspaced}$ on $H^2(M; \R)$ is the norm dual to
  the usual Thurston norm on $H_2(M; \R)$.   It is not hard to show that the
  harmonic norms on $H^1(M; \R)$ and $H^2(M; \R)$ are dual for any
  fixed $h$.  Hence if $c_h = \hnorm{s_h}/4 \pi$ the norm
  $c_h \hnorm{\cdotspaced}$ on $H^1(M; \R)$ is dual to
  $\frac{1}{c_h} \hnorm{\cdotspaced}$ on $H^2(M; \R)$.  The
  equivalence of (\ref{eq:KM}) and (\ref{eq:KMorig}) now follows from
  the following simple fact: if $x_n$ are norms on $V$ where
  $x = \sup x_n$ is finite on $V$, then $x$ is a norm with
  $x^* = \inf x_n^*$.
\end{proof}

\section{Families of examples}

This section is devoted to proving Theorems~\ref{thm:lowerexs} and
\ref{thm:injsmallexs}, starting with the former as it is easier. 

\thmlowerexs*

\begin{proof}[Proof of Theorem~\ref{thm:lowerexs}]
  The examples $M_n$ will be built from a tower of finite covers,
  where the $\phi_n$ are pullbacks of some fixed class
  $\phi_0 \in H^1(M_0)$.  To analyze that situation, first consider a
  degree $d$ covering map $\pi: \Mtil \to M$ and some
  $\phi\in H^1(M)$; as we now explain, the norms of $\phi$ and
  $\pi^*(\phi)$ differ by factors depending only on $d$.  If $\alpha$
  is the harmonic representative of $\phi$, then as being in the
  kernel of the Laplacian is a local property, the form
  $\pi^*(\alpha)$ must be the harmonic representative of
  $\pi^*(\phi)$.  It follows that
  \[
  \harnorm{\pi^*(\phi)} = \sqrt{d} \cdot \harnorm{\phi}.
  \]
  In contrast, it is a deep theorem of Gabai
  \cite[Cor.~6.13]{Gabai1983} that 
  \[
  \thnorm{\pi^*(\phi)} = d \cdot \thnorm{\phi}
  \]
  Thus, the ratio
  \[
  \frac{\thnorm{\cdotspaced}}{\harnorm{\cdotspaced} \sqrt{\vol{(\cdotspaced)}}}
  \]
  is the same for both $(M, \phi)$ and $\left(\Mtil, \pi^*(\phi)\right)$.  

  To prove the theorem, let $M_0$ be any closed hyperbolic 3-manifold
  with a nonzero class $\phi_0\in H^1(M_0)$ and choose a tower of
  finite covers 
  \[
  M_0 \leftarrow M_1 \leftarrow M_2 \leftarrow M_3 \leftarrow \cdots
  \]
  where $\inj(M_n) \to \infty$; this can be done since $\pi_1(M_0)$ is
  residually finite, indeed residually simple, see e.g.~\cite{LongReid1998}.

  Taking $\phi_n \in H^1(M_n)$ to be the pullback of $\phi$, we have
  constructed pairs $(M_n, \phi_n)$ which have all the claimed
  properties.
\end{proof}

\subsection{Harmonic forms on tubes}  To prove
Theorem~\ref{thm:injsmallexs}, we will need:

\begin{lemma}\label{lem:tube}
  Suppose $V$ is a tube in a closed hyperbolic \3-manifold $M$ with a
  core $C$ of length $\epsilon$ and depth $R$.  If $\alpha$ is a
  harmonic 1-form on $M$ with $\int_C \alpha = 1$ then
  \begin{equation}
    \harnorm{\alpha|_V} \geq
      \sqrt{\frac{2 \pi}{\epsilon} \log\left(\cosh R \right)}
  \end{equation}
\end{lemma}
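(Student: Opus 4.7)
The plan is to work in the natural cylindrical Fermi coordinates about the core geodesic $C$ and reduce the estimate to a pointwise application of Cauchy-Schwarz along vertical curves.

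First I would set up coordinates $(r,\theta,z)$ on the universal cover of $V$ adapted to a geodesic lift $\tilde C$ of $C$, so that the hyperbolic metric takes the form
\[
ds^2 = dr^2 + \sinh^2(r)\,d\theta^2 + \cosh^2(r)\,dz^2,
\]
with volume form $\sinh(r)\cosh(r)\,dr\,d\theta\,dz$. The tube $V$ is obtained by quotienting by a deck transformation $\gamma:(r,\theta,z)\mapsto(r,\theta+\theta_0,z+\epsilon)$ (allowing a twist), and a fundamental domain is $\{0\le r<R,\ 0\le\theta<2\pi,\ 0\le z<\epsilon\}$. Note that $dz$ and $|\alpha|$ are both $\gamma$-invariant, so they descend to $V$.

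Next, for each pair $(r,\theta)$ I would consider the curve $\gamma_{r,\theta}(t)=(r,\theta,t)$ for $t\in[0,\epsilon]$ in the universal cover; after identifying its endpoints by $\gamma$ it projects to a loop in $V$ that is freely homotopic to the core $C$. Since $\alpha$ is closed, this gives
\[
\int_0^\epsilon \alpha_{(r,\theta,t)}(\partial_z)\,dt \;=\; \int_C \alpha \;=\; 1.
\]
Combining the pointwise bound $|\alpha(\partial_z)|\le |\alpha|\cdot|\partial_z|=|\alpha|\cosh(r)$ with Cauchy-Schwarz in the $t$ variable yields
\[
1 \;\le\; \epsilon\int_0^\epsilon |\alpha|^2 \cosh^2(r)\,dz,
\]
so that $\int_0^\epsilon |\alpha|^2\,dz \ge 1/\big(\epsilon\cosh^2(r)\big)$ for every $(r,\theta)$.

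Finally I would integrate this inequality against the cross-sectional measure $\sinh(r)\cosh(r)\,dr\,d\theta$ over $r\in[0,R]$ and $\theta\in[0,2\pi)$:
\[
\harnorm{\alpha|_V}^2 \;\ge\; \int_0^R\!\!\int_0^{2\pi} \frac{\sinh(r)\cosh(r)}{\epsilon\cosh^2(r)}\,d\theta\,dr \;=\; \frac{2\pi}{\epsilon}\int_0^R \tanh(r)\,dr \;=\; \frac{2\pi}{\epsilon}\log(\cosh R),
\]
which is the claimed bound. There is no real obstacle here: the only subtlety is accounting for a possible twist $\theta_0$ in the deck transformation, but because $dz$ and $|\alpha|$ are $\gamma$-invariant and $\gamma_{r,\theta}$ remains homotopic to $C$ after identification, the argument is unaffected. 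It is worth noting that only closedness of $\alpha$ (not full harmonicity) is used, and the bound is essentially sharp, being realized by the form $dz/\epsilon$.
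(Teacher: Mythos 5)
There is a real gap, and it is exactly in the handling of the twist that you dismiss as unproblematic. When $\theta_0 \neq 0$, the path $\gamma_{r,\theta}(t)=(r,\theta,t)$, $t\in[0,\epsilon]$, does \emph{not} project to a closed loop in $V$: its endpoints $(r,\theta,0)$ and $(r,\theta,\epsilon)$ are not identified, since $\gamma$ sends $(r,\theta,0)$ to $(r,\theta+\theta_0,\epsilon)$. Writing $\tilde\alpha = df$ on the universal cover (normalized so that $f\circ\gamma = f+1$), the actual value of your period integral is
$\int_0^\epsilon\alpha(\partial_z)\,dt = f(r,\theta-\theta_0,0)-f(r,\theta,0)+1$,
which has no reason to equal $1$ at any particular $\theta$. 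So the pointwise period identity on which your Cauchy--Schwarz step rests is false in general; saying that $dz$ and $|\alpha|$ are $\gamma$-invariant does not make the vertical arcs closed.

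The fix is to average over $\theta$ before applying Cauchy--Schwarz. Since $f(r,\cdot,0)$ is $2\pi$-periodic, the unwanted difference term integrates to zero, giving $\int_0^{2\pi}\int_0^\epsilon\alpha(\partial_z)\,dz\,d\theta = 2\pi$ for every fixed $r$. Running Cauchy--Schwarz on the two-dimensional slice $\{r=\mathrm{const}\}$ with measure $dz\,d\theta$ (rather than on individual $z$-fibers) then gives $\int_0^{2\pi}\int_0^\epsilon |\alpha|^2\,dz\,d\theta \geq 2\pi/\big(\epsilon\cosh^2 r\big)$, and integrating against $\sinh(r)\cosh(r)\,dr$ over $[0,R]$ recovers exactly the stated bound. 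With this repair your argument becomes a genuine alternative to the paper's proof, which instead averages the \emph{form} $\alpha$ over the $S^1\times S^1$ isometry group of $V$, verifies that the average is still closed and coclosed with the right period and radial decay, identifies it as $dz/\epsilon$, and then compares norms by the triangle inequality. Averaging the \emph{period}, as you do, is a bit more direct and avoids that symmetrization bookkeeping, but only once the twist is treated correctly.
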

Before proving the lemma, we give coordinates on the tube $V$ and
do some preliminary calculations.  Specifically, consider cylindrical
coordinates
$(r, \theta, z) \in [0, R] \times [0, 2\pi] \times [0, \epsilon]$ with
the additional identification
$(r, \theta, \epsilon) \sim (r, \theta + \theta_0, 0)$, where the
twist angle $\theta_0$ is determined by the geometry of the tube; the
metric on $V$ is then given by
\[ 
g_{\H^3} = \dr^2 + \sinh^2(r) \dtheta^2 + \cosh^2(r) \dz^2
\]
Since $\left\{ \dr, \sinh(r) \dtheta, \cosh(r) \dz \right\}$ gives an
orthonormal basis of 1-forms at each point of $V$, we have
\begin{equation}\label{eq:voltube}
\Vol(V) = \int_V \dVol = 
  \int_0^\epsilon \int_0^{2 \pi} \int_0^R \sinh(r) \cosh(r) \dr
  \dtheta \dz
  = \pi \epsilon \sinh^2(R)
\end{equation}
Note that the form $\dz$ is compatible with the identification of $z =
0$ with $z = \epsilon$, giving a 1-form on $V$.  Now the form
$\dz$ is closed and also coclosed since
\[
d^*(\dz) = -*d*\dz = - * d \big(\tanh(r) \dr \wedge \dtheta\big) 
  = - * 0 = 0
\]
and hence $\dz$ is harmonic.  Notice $\omega = \frac{1}{\epsilon} \dz$ is
a reasonable candidate for $\alpha|_V$ in Lemma~\ref{lem:tube} as
$\int_C \omega = 1$.  For this form, we have
\begin{equation}
\begin{split}\label{eq:normomega}
\harnorm{\omega}^2 &= \int_V \omega \wedge * \omega 
= \frac{1}{\epsilon^2} \int_V \dz \wedge \big(\tanh(r) \dr \wedge
  \dtheta\big) \\ 
&= \frac{1}{\epsilon^2} \int_0^\epsilon \int_0^{2\pi} \int_0^R \tanh(r) \dr 
  \dtheta \dz
 = \frac{2 \pi}{\epsilon} \log(\cosh R)
\end{split}
\end{equation}
Thus Lemma~\ref{lem:tube} can be interpreted as saying that the
harmonic norm of $\alpha|_V$ is at least that of this explicit
$\omega$, and we take this viewpoint in the proof itself.

\begin{proof}[Proof of Lemma~\ref{lem:tube}]
  From now on, we denote $\alpha|_V$ by $\alpha$ and we use only that
  $\alpha$ is closed, coclosed, and $\int_C \alpha = 1$.  There is an
  action of $G = S^1 \times S^1$ on $V$ by isometries, namely
  translations in the $\theta$ and $z$ coordinates.  First, we show it
  suffices to prove the lemma for the average of $\alpha$ under this
  action, namely
  \[
  \alphabar = \int_{G} g^*(\alpha) \dg \mtext{where $\dg$ is Haar
    measure on $G$.}
  \]
  The advantage of $\alphabar$ will be that it must be
  $G$-invariant.  Note that $\alphabar$ can be $C^\infty$-approximated
  by finite averages over suitably chosen finite subsets $\{g_i\}$ of
  $G$:
  \begin{equation}\label{eq:finiteave}
  \alphabar \approx \frac{1}{N} \sum^N_i g_i^*(\alpha)
  \end{equation}
  and from this it follows that $\alphabar$ is closed, coclosed, and
  $\int_C \alphabar = 1$.  Moreover, since all
  $\harnorm{g_i^*(\alpha)} = \harnorm{\alpha}$, the triangle
  inequality applied to (\ref{eq:finiteave}) gives that
  $\harnorm{\alphabar} \leq \harnorm{\alpha}$.  This shows we need
  only consider the $G$-invariant form $\alphabar$.

  We next show that $\alphabar$ is equal to 
  $\omega = \frac{1}{\epsilon} \dz$; combined with the
  calculation of $\harnorm{\omega}$ in (\ref{eq:normomega}), this will
  establish the lemma.  Since the \1-forms
  $\{ \dr, \dtheta, \dz \}$ are $G$-invariant, it follows that
  $\alphabar$ can be expressed as
  \[
  \alphabar = a(r) \dr + b(r) \dtheta  + c(r) \dz
  \]
  Using that $\dalphabar = 0$, we get that $b$ and $c$ must be
  constants; as $\abs{\dtheta} = 1/\sinh(r)$, we must further have
  $b=0$ so that $\alphabar$ makes sense along the core $C$ where
  $r=0$.  As $\alphabar$ is coclosed, we learn that
  \[
  \begin{split}
    0 &= d(*\alphabar) = d\big( a(r) \sinh(r) \cosh(r) \dtheta \wedge \dz +
      c \tanh(r) \dr \wedge \dtheta \big) \\
      & = \frac{\partial}{\partial r} \big( a(r) \sinh(r) \cosh(r)
      \big) \dr \wedge \dtheta \wedge \dz
  \end{split}
  \]
  Consequently, $a(r) \sinh(r) \cosh(r)$ is constant. Moreover, that
  constant must be $0$ to prevent $a(r)$ from blowing up as $r \to
  0$.  So we know $\alphabar = c \dz$ and finally that $c$ must be
  $1/\epsilon$ to ensure $\int_C \alphabar = 1$.  So $\alphabar$ is
  equal to $\omega$ as claimed, proving the lemma. 
\end{proof}

\thminjsmallexs*

\begin{figure}[tb]
  \begin{center}
  \begin{tikzpicture}[scale=0.7, rotate=-40, line width=1.5, line cap=round, line join=round]
  \begin{scope}
    \draw (4.72, 4.57) .. controls (4.27, 4.57) and (3.81, 4.73) .. 
          (3.81, 5.12) .. controls (3.81, 5.58) and (4.39, 5.68) .. (4.92, 5.68);
    \draw (4.92, 5.68) .. controls (5.23, 5.68) and (5.55, 5.68) .. (5.86, 5.68);
    \draw (6.27, 5.68) .. controls (6.56, 5.68) and (6.86, 5.68) .. (7.15, 5.68);
    \draw (7.15, 5.68) .. controls (7.69, 5.68) and (8.27, 5.58) .. 
          (8.27, 5.12) .. controls (8.27, 4.73) and (7.81, 4.57) .. (7.36, 4.57);
    \draw (6.95, 4.57) .. controls (6.65, 4.57) and (6.35, 4.57) .. (6.05, 4.57);
    \draw (6.05, 4.57) .. controls (5.75, 4.57) and (5.44, 4.57) .. (5.13, 4.57);
  \end{scope}
  \begin{scope}
    \draw (6.05, 3.25) .. controls (6.04, 2.80) and (5.88, 2.34) .. 
          (5.48, 2.34) .. controls (5.02, 2.34) and (4.92, 2.92) .. (4.92, 3.45);
    \draw (4.92, 3.45) .. controls (4.92, 3.82) and (4.92, 4.19) .. (4.92, 4.57);
    \draw (4.92, 4.57) .. controls (4.92, 4.87) and (4.92, 5.17) .. (4.92, 5.48);
    \draw (4.92, 5.89) .. controls (4.92, 6.43) and (4.39, 6.80) .. 
          (3.81, 6.80) .. controls (2.69, 6.80) and (2.69, 5.10) .. (2.69, 3.66);
    \draw (2.69, 3.25) .. controls (2.69, 2.57) and (2.69, 1.90) .. (2.69, 1.22);
    \draw (2.69, 1.22) .. controls (2.69, 0.61) and (2.19, 0.11) .. 
          (1.58, 0.11) .. controls (0.46, 0.11) and (0.76, 2.62) .. 
          (1.01, 4.66) .. controls (1.31, 7.17) and (3.40, 9.10) .. (5.88, 8.95);
    \draw (6.29, 8.92) .. controls (7.27, 8.86) and (8.26, 8.80) .. (9.24, 8.74);
    \draw (9.24, 8.74) .. controls (9.84, 8.70) and (10.38, 8.31) .. 
          (10.43, 7.73) .. controls (10.47, 7.22) and (10.03, 6.80) .. (9.50, 6.80);
    \draw (9.09, 6.80) .. controls (8.17, 6.80) and (7.15, 6.67) .. (7.15, 5.89);
    \draw (7.15, 5.48) .. controls (7.15, 5.17) and (7.15, 4.87) .. (7.15, 4.57);
    \draw (7.15, 4.57) .. controls (7.15, 4.19) and (7.15, 3.82) .. (7.15, 3.45);
    \draw (7.15, 3.45) .. controls (7.15, 1.69) and (4.95, 1.22) .. (2.90, 1.22);
    \draw (2.49, 1.22) .. controls (1.94, 1.22) and (1.58, 1.75) .. 
          (1.58, 2.34) .. controls (1.58, 2.95) and (2.08, 3.45) .. (2.69, 3.45);
    \draw (2.69, 3.45) .. controls (3.37, 3.45) and (4.04, 3.45) .. (4.72, 3.45);
    \draw (5.13, 3.45) .. controls (5.44, 3.45) and (5.74, 3.45) .. (6.05, 3.45);
    \draw (6.05, 3.45) .. controls (6.35, 3.45) and (6.65, 3.45) .. (6.95, 3.45);
    \draw (7.36, 3.45) .. controls (8.78, 3.45) and (9.34, 5.15) .. (9.29, 6.80);
    \draw (9.29, 6.80) .. controls (9.28, 7.37) and (9.26, 7.95) .. (9.25, 8.53);
    \draw (9.24, 8.94) .. controls (9.22, 9.60) and (8.40, 9.78) .. 
          (7.65, 9.82) .. controls (6.88, 9.86) and (6.09, 9.60) .. (6.08, 8.93);
    \draw (6.08, 8.93) .. controls (6.08, 7.85) and (6.07, 6.76) .. (6.06, 5.68);
    \draw (6.06, 5.68) .. controls (6.06, 5.38) and (6.06, 5.07) .. (6.06, 4.77);
    \draw (6.05, 4.36) .. controls (6.05, 4.13) and (6.05, 3.89) .. (6.05, 3.66);
  \end{scope}
  \end{tikzpicture}
  \end{center}

  \vspace{-2.5ex}

  \caption{The exterior $W$ of the link $L = L14n21792$ above is one
    example that satisfies the conditions used in the proof of
    Theorem~\ref{thm:injsmallexs}.  Using \cite{SnapPy, hikmot2016},
    one can show $W$ is hyperbolic with volume
    $\approx 9.67280773079$. The maps $H_1(T_i; \Z) \to H_1(W; \Z)$
    are isomorphisms since the two components of $L$ have linking
    number $1$. Using SnapPy \cite{SnapPy}, one checks that $W$ is the
    orientation cover of the nonorientable census manifold $X = x064$.
    As $\partial X$ is a torus and $H_1(X; \Z) = \Z^2$, the nontrivial
    covering transformation of $W \to X$ must interchange the two
    components of $\partial W$ and act trivially on $H_1(W; \Z)$,
    giving us the needed symmetry of $W$.  Finally, a presentation for
    $\pi(X)$ is
    $\big\langle a, b \ \big| \
    a^2bab^{-2}a^{-1}b^2a^{-1}ba^{-1}b^{-2} = 1\big\rangle$, and
    applying \cite{Brown1987} to compute the BNS invariant shows there
    are many homomorphisms $\pi_1(X; \Z) \to \Z$ with finitely
    generated kernel and hence both $X$ and $W$ fiber over the circle
    by \cite{Stallings1962}.  }
  \label{fig:link}
\end{figure}

\begin{proof}[Proof of Theorem~\ref{thm:injsmallexs} ]
  Our examples are made by Dehn filling a certain \2-cusped hyperbolic
  \3-manifold.  Let $W$ be a compact manifold with
  $\bdry W = T_1 \sqcup T_2$ where both $T_i$ are tori, whose interior
  is hyperbolic, which fibers over the circle, and where maps
  $H_1(T_i; \Z) \to H_1(W; \Z)$ are isomorphisms.  (The last
  condition should be interpreted as saying that $W$ is homologically
  indistinguishable from $T \times I$.)  Further, we require that $W$
  has an orientation-reversing involution that interchanges the
  $T_i$ and acts on $H_1(W; \Z)$ by the identity.  One such $W$ is described
  in Figure~\ref{fig:link}. Since $W$ fibers, there is a
  \1-dimensional face $F$ of the Thurston norm ball so that any
  $\phi \in H^1(W; \Z)$ in the cone $C_F = \R_{>0} \cdot F$ can be
  represented by a fibration.  Choose $\alpha, \beta \in C_F$ that
  form an integral basis for $H^1(W; \Z)$, and let
  $a, b \in H_1(W; \Z)$ be the dual homological basis where
  $\alpha(a) = \beta(b) = 1$ and $\alpha(b) = \beta(a) = 0$.  Let
  $M_n$ be the closed manifold obtained by Dehn filling $W$ along the
  curves in $T_i$ homologically equal to $a - n b$; since $W$ is a
  $\Z$-homology $T \times I$, it follows that $H^1(M_n; \Z)$ is $\Z$
  and is generated by the extension $\phi_n$ of
  $\phitil_n = n \alpha + \beta$ to $M_n$.  We will show there are
  constants $c_1, c_2 > 0$ so that
  \begin{enumerate}[(i)]
  \item \label{item:norm}
    For all $n$, we have 
    $\thnorm{\phi_n} = n \thnorm{\alpha} + \thnorm{\beta} - 2$.
  
  \item \label{item:hyper} For large $n$, the manifold $M_n$ is
    hyperbolic.  Moreover, we have $\vol(M_n) \to \vol(W)$ and
    $\inj(M_n) \sim \frac{c_1}{n^2}$ as $n \to \infty$.
  
   \item \label{item:tube}
     For large $n$, there is a tube $V_n$ in $M_n$ with core geodesic
     $\gamma_n$ of length $2 \inj(M_n)$, with depth 
     $R_n \geq\arcsinh(c_2 n)$, and where $\int_{\gamma_n} \phi_n = 1$. 
  \end{enumerate}
  Here is why these three claims imply the theorem.  From 
  (\ref{item:hyper}) and (\ref{item:tube}), an easy calculation with
  Lemma~\ref{lem:tube} gives a $c_4 > 0$ so that 
  \begin{equation}\label{eq:normgrowth}
  \harnorm{\phi_n} \geq \harnorm{\phi_n |_{V_n}} 
      \geq c_4 n \sqrt{\log n} \mtext{for all large $n$.}
  \end{equation}
  Combining (\ref{eq:normgrowth}) with
  (\ref{item:norm}-\ref{item:tube}) now gives both parts of
  Theorem~\ref{thm:injsmallexs}.  So it remains to prove the claims
  (\ref{item:norm}-\ref{item:tube}).  

  For (\ref{item:norm}), first note that since $\thnorm{\cdotspaced}$
  is linear on $C_F$ we have 
  \[
  \thnorm{\phitil_n} = n \thnorm{\alpha} + \thnorm{\beta}
  \]
  Let $\Stil_n$ be a fiber in the fibration dual to $\phitil_n$, and
  hence $\eulerminus{\Stil_n} = \thnorm{\phitil_n}$ by
  \cite[\S 3]{Thurston1986}.

  For homological reasons, the boundary of $\Stil_n$ has only one
  connected component in each $T_i$.  Thus $\Stil_n$ can be capped off
  with two discs to give a surface $S_n \subset M_n$ which is a fiber
  in a fibration of $M_n$ over the circle.  Hence
  \[
  \thnorm{\phi_n} = \eulerminus{S_n} = \eulerminus{\phitil_n} - 2 
    =  \thnorm{\phitil_n} - 2 = n \thnorm{\alpha} + \thnorm{\beta} - 2
  \]
  proving claim (\ref{item:norm}).

  Starting in on (\ref{item:hyper}), the Hyperbolic Dehn Surgery
  Theorem \cite{ThurstonLectureNotes} shows that $M_n$ is hyperbolic
  for large $n$ and that $\vol(M_n)$ converges to $\vol(W)$. Moreover,
  for large enough $n$, the cores of the Dehn filling solid tori are
  isotopic to the two shortest geodesics in $M_n$.  In fact, these
  core geodesics have the same length: the required involution of $W$
  extends to $M_n$ and thus give an isometry of $M_n$ which
  interchanges them.  Let $\gamma_n \subset M_n$ be the core geodesic
  inside $T_1$.  Using \cite[Proposition~4.3]{NeumannZagier1985} one
  can show that there is a constant $c_1 > 0$ so that
  $\length(\gamma_n) \sim \frac{2 c_1}{n^2}$ as $n \to \infty$.  As
  $2 \inj(M_n) = \length(\gamma_n)$, we have shown (\ref{item:hyper}).

  Finally, for (\ref{item:tube}), first note that 
  $\gamma_n$ meets the fiber surface $S_n$ in a single point, giving
  $\int_{\gamma_n} \phi_n = 1$ if we orient $\gamma_n$ suitably.  So
  it remains only to estimate the depth $R_n$ of the Margulis tube $V_n$
  about $\gamma_n$.  Let $V_n'$ be the image of $V_n$ under the above
  involution of $M_n$.  The picture from the Hyperbolic Dehn Surgery
  Theorem shows that the geometry of $M_n$ outside the
  $V_n \sqcup V_n'$ converges to that of a compact subset of $W$.  As
  $\vol(M_n) \to \vol(W)$, we must have that the volume of $V_n$
  converges as $n \to \infty$.  A straightforward calculation using
  (\ref{eq:voltube}) now gives the lower bound on $R_n$ claimed in
  (\ref{item:tube}). 
\end{proof}

\section{Proof of Theorem~\ref{thm:largenorm}}

\begin{figure}[bt]
\begin{center}
\begin{tikzoverlay}[scale=0.25]{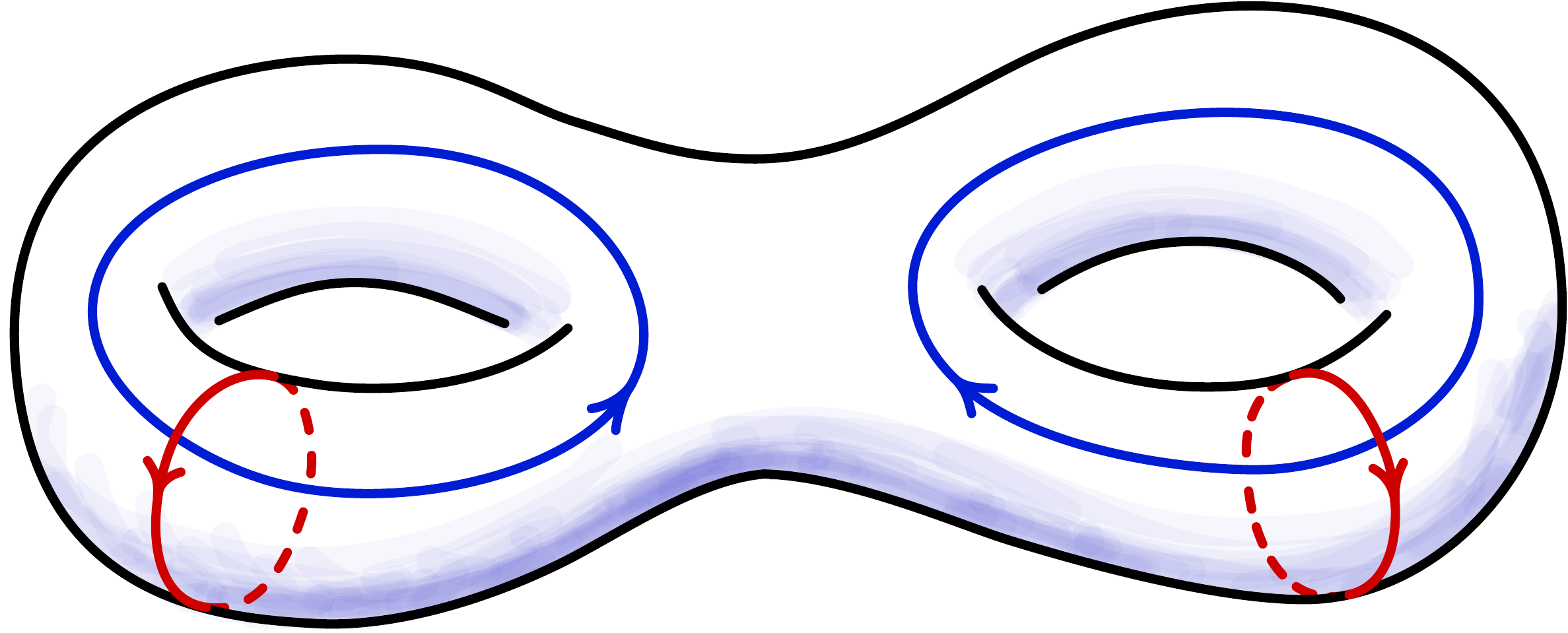}[font=\small]
    \node[right] at (9.5,5.5) {$e_1$};
    \node[right] at (40.5,16.3) {$e_2$};
    \node[left] at (60.7,16.3) {$e_4$};
    \node[left] at (90.0,6.0) {$e_3$};
    \node[below=7pt] at (49.6,1.1) {(a) Generators of $H_1(S; \Z)$};
\end{tikzoverlay} \hspace{1cm}
\begin{tikzoverlay}[scale=0.25]{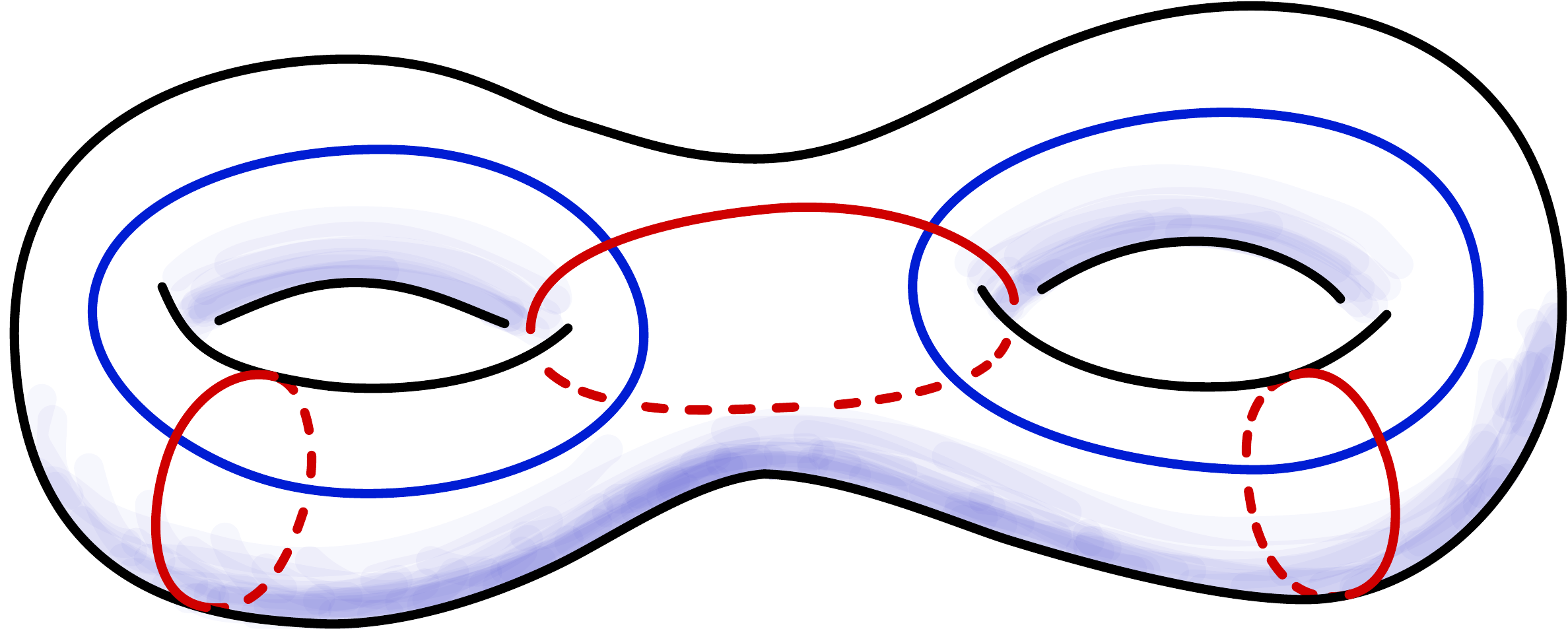}[font=\small]
    \node[right] at (10.1,6.0) {$a$};
    \node[below] at (20.5,30.9) {$b$};
    \node[below] at (49.6,27.0) {$c$};
    \node[below] at (86.8,31.7) {$d$};
    \node[left] at (89.0,7.1) {$e$};
    \node[below=7pt] at (49.6,1.1) {(b) Curves for Dehn twisting};
\end{tikzoverlay}
\end{center}

\vspace{-0.5cm}
  \caption{Conventions for the genus 2 surface $S$.}
  \label{fig:genus2}
\end{figure}

This section is devoted to proving:

\thmlargenorm*

\noindent
We first outline the construction of the $M_n$, sketch why they should
have the desired properties, and finally fill in the details. In
contrast with the rest of this paper, in this section, all
(co)homology groups will use $\Z$ coefficients.

\subsection{The construction}

Let $S$ be a fixed surface of genus 2.  We use the basis $e_i$ for
$H_1(S)$ shown in Figure~\ref{fig:genus2}(a); we take $e^i$ to be the
algebraically dual basis of $H^1(S)$, that is, the one where
$e^i(e_j) = \delta_{ij}$.  Let $W$ be a compact hyperbolic \3-manifold
with totally geodesic boundary, where $\partial W$ has genus 2 and the
map $H_1(\partial W) \to H_1(W)$ is onto; one possible choice for $W$
is the tripus manifold of \cite[\S 3.3.12]{Thurston1997}.  Note that
homologically $W$ is indistinguishable from a genus 2 handlebody.  Let
$W_1$ and $W_2$ be two copies of $W$ whose boundaries have been marked
by $S$ so that
$\ker\left( H_1(S) \to H_1(W_1) \right) = \pair{e_2, e_4}$ and
$\ker\left( H_1(S) \to H_1(W_2) \right) = \pair{e_2, e_3}$.  Thus
$H_1(W_1)$ is spanned by the images of $e_1$ and $e_3$ and so
$H^1(W_1)$ is spanned by natural extensions of $e^1$ and $e^3$.  For 
a carefully chosen pseudo-Anosov $f \maps S \to S$, the examples
used in Theorem~\ref{thm:largenorm} are
\[
M_n = W_1 \cup_{f^n} W_2 = \rightquom{W_1 \sqcup W_2}{(x \in \partial
  W_1)\sim (f^n(x) \in \partial W_2)}{3pt}{\big}
\]
We first sketch the requirements on $f$ and the overall structure of
the argument.  Note that our particular markings mean that
$H^1(M_0) = \Z = \pair{e^1}$, where we are taking $f^0$ to be the
identity map on $S$.  The key requirement is that
$f^* \maps H^1(S) \to H^1(S)$ preserves the subspace $\pair{e^1, e^3}$
and acts on it as an Anosov matrix in $\SL{2}{\Z}$.  We will also
arrange that $H^1(M_n) = \Z = \pair{\phi_n}$ for all $n$.  Since the
action of $f^*$ on $\pair{e^1, e^3}$ is complicated, the coefficients
of $\phi_n$ with respect to $\{e^1, e^3\}$ will grow exponentially in
$n$.  This will force the restriction of $\phi_n$ to $W_1$ to have
Thurston norm which is exponential in $n$, and a standard lemma will
then give that $\phi_n$ itself has Thurston norm which is exponential
in $n$.  We will also show that $\vol(M_n)$ grows (essentially)
linearly in $n$, and combining these will give part
(\ref{item:expgrowth}) of Theorem~\ref{thm:largenorm}.

The specific choice we make for $f$ is given in the next lemma.
\begin{lemma}\label{lem:pA}
  There exists a pseudo-Anosov $f \maps S \to S$ whose action $f_*$ on
  $H_1(S)$ is given by 
  \[
  B = \left(\begin{array}{rrrr}
                3 & 0 & 1 & 0 \\
                1 & 0 & 0 & -1 \\
                -1 & 0 & 0 & 0 \\
                1 & 1 & 1 & 3
              \end{array}\right)
 \]
\end{lemma}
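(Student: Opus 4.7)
The plan is to exhibit $f$ as an explicit word $w$ in Dehn twists about the five simple closed curves $a, b, c, d, e$ drawn in Figure~\ref{fig:genus2}(b), and then verify both (i) that the induced action $f_*$ on $H_1(S; \Z)$ equals the matrix $B$, and (ii) that $w$ represents a pseudo-Anosov mapping class.

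For (i), the first step is a finite linear-algebra computation. For any oriented simple closed curve $\gamma$ on $S$, the Dehn twist $T_\gamma$ acts on $H_1(S;\Z)$ by the symplectic transvection $v \mapsto v + \langle v, [\gamma]\rangle\, [\gamma]$, where $\langle \cdot, \cdot\rangle$ is the algebraic intersection form. Reading the homology classes $[a], \ldots, [e]$ off Figure~\ref{fig:genus2} in the basis $\{e_1, e_2, e_3, e_4\}$ yields explicit $4 \times 4$ integer matrices $M_a, \ldots, M_e \in \mathrm{Sp}(4, \Z)$ for the actions of the $T_x$ on $H_1(S;\Z)$. Realizing $B$ as a product of the $M_x^{\pm 1}$ then reduces to a finite word problem that can be attacked by direct enumeration of short words.

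For (ii), the natural device is to require the word $w$ to fit the template of Penner's construction. The curves $a, b, c, d, e$ in Figure~\ref{fig:genus2}(b) are chosen so that they split into two multicurves, for instance $\alpha = \{a, c, e\}$ and $\beta = \{b, d\}$, whose union fills $S$. Any word composed of positive powers of the twists $T_x$ with $x \in \alpha$ and negative powers of $T_x$ with $x \in \beta$, in which every listed twist appears at least once, is then automatically pseudo-Anosov. Combining this with step (i), the search for $w$ is restricted to Penner words and carried out by enumeration in order of increasing length, multiplying the matrices $M_x^{\pm 1}$ in the specified order until a product equal to $B$ is found.

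The main obstacle is reconciling the rigid linear-algebra constraint of realizing the exact matrix $B$ with the combinatorial constraint of staying inside the Penner template; this is the only nontrivial step, and in practice is handled by a brief computer-assisted enumeration. Once such a word $w$ is exhibited, the proof of Lemma~\ref{lem:pA} is completed by two routine checks: a direct matrix multiplication to verify that the homological action equals $B$, and an appeal to Penner's theorem, given the form of $w$, to conclude the pseudo-Anosov property. Should no sufficiently short Penner word produce $B$, an alternative is to consider a more general word with homological action $B$ and verify pseudo-Anosovness by a train-track calculation (e.g.~via the Bestvina-Handel algorithm or software such as Flipper).
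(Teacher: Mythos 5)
Your proposal is close in spirit to the paper's \emph{alternative} argument, but you miss the paper's primary, and simpler, argument, and your preferred verification technique (Penner's criterion) would in fact not apply to the word the paper uses.

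The paper gives two proofs. The first is entirely structural and avoids any search: the map $\MCG(S) \to \Aut(H_1(S;\Z))$ is known to be onto $\Sp(4,\Z)$, so after checking $B^t J B = J$ one gets \emph{some} $f$ with $f_* = B$; since this only pins $f$ down modulo the Torelli group, one then composes with a suitably complicated Torelli element to make $f$ pseudo-Anosov. This is cheaper than your approach because no enumeration is needed and no Penner-type combinatorial constraint must be satisfied simultaneously with the homological one. The second proof in the paper is the one you are reinventing: an explicit Dehn-twist word $f = \tau_a \tau_d^{-1}\tau_c\tau_b^{-1}\tau_d\tau_c^{-1}\tau_e^{-1}$, a direct matrix check that $f_* = B$, and a rigorous computer verification (via hyperbolicity of the mapping torus) that $f$ is pseudo-Anosov.

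One concrete caveat for your route: the word the paper actually uses is \emph{not} a Penner word, since $\tau_c$ and $\tau_c^{-1}$ (and likewise $\tau_d$ and $\tau_d^{-1}$) both occur, violating Penner's sign constraint. More importantly, it is not clear a priori that the submonoid of $\Sp(4,\Z)$ generated by the sign-constrained transvections for the multicurves $\{a,c,e\}$ and $\{b,d\}$ contains $B$ at all, so the Penner template may simply fail to realize this particular homological action. You do anticipate this and offer the fallback of verifying pseudo-Anosovness by train tracks or software for a general word, which is sound and is essentially what the paper does (with hyperbolicity of the mapping torus in place of a train-track computation). So your proposal is workable, but the first step should be to try the structural surjectivity-plus-Torelli argument, which sidesteps the hard part entirely.
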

\begin{proof}
  The homomorphism $\MCG(S) \to \Aut(H_1(S))$ induced by taking the
  action of a mapping class on homology is onto the
  symplectic group of the form given by
  \[
  J = \left(\begin{array}{rr|rr}
              0 & -1 & 0 & 0 \\
              1 & 0 & 0 & 0 \\
              \hline
              0 & 0 & 0 & 1 \\
              0 & 0 & -1 & 0
            \end{array}\right)
 \]
 where we are following the conventions given in
 Figure~\ref{fig:genus2}(a).  It is easy to check that $B^t J B = J$,
 and so there is some $f \in \MCG(S)$ with $f_* = B$.  Composing $f$
 with a complicated element of the Torelli group if necessary, we can
 arrange that $f$ is pseudo-Anosov as required.

 Alternatively, consider the following product of Dehn twists:
\[
f = \tau_a \circ \tau_d^{-1} \circ \tau_c \circ \tau_b^{-1} \circ \tau_d \circ \tau_c^{-1} \circ \tau_e^{-1}
\]
where the curve labeling conventions for the right-handed Dehn twists
$\tau$ follow Figure~\ref{fig:genus2}(b).  An easy calculation gives
$f_* = B$, and using \cite{Twister, SnapPy, hikmot2016} one can
rigorously verify that the mapping torus of $f$ is hyperbolic with volume
$\approx 7.51768989647$; in particular, $f$ is pseudo-Anosov.  
\end{proof}

The precise technical statements needed to prove
Theorem~\ref{thm:largenorm} are the following three lemmas.

\begin{lemma}\label{lem:alggengrows}
  For all $n$, the group $H^1(M_n) \cong \Z$ is generated by
  $\phi_n = a_n e^1 + c_n e^3$ where both $a_n$ and $c_n$ grow
  exponentially in $n$ at a rate of
  $\lambda = \frac{3 + \sqrt{5}}{2} \approx 2.62$.
\end{lemma}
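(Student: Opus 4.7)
The plan is to apply Mayer--Vietoris to the decomposition $M_n = W_1 \cup_{f^n} W_2$ glued along $S$, and then to exploit the block structure of $f^{*} \maps H^1(S) \to H^1(S)$. First, because the restriction map $H^0(W_1) \oplus H^0(W_2) \to H^0(S)$ is surjective, the connecting homomorphism $H^0(S) \to H^1(M_n)$ vanishes and the cohomology Mayer--Vietoris sequence gives the identification
\[
H^1(M_n) \cong \ker\!\Bigl(H^1(W_1) \oplus H^1(W_2) \xrightarrow{\;i_1^{*} \,-\, (i_2 \circ f^n)^{*}\;} H^1(S)\Bigr).
\]
Under the inclusion-induced restrictions, which are injective since each $i_{j*}$ is surjective on $H_1$, the group $H^1(W_1)$ is identified with $\langle e^1, e^3 \rangle \subset H^1(S)$ and $H^1(W_2)$ with $\langle e^1, e^4 \rangle$. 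Hence a class in $H^1(M_n)$ is a pair $(\phi_1, \phi_2) \in \langle e^1, e^3 \rangle \oplus \langle e^1, e^4 \rangle$ satisfying $\phi_1 = (f^n)^{*}(\phi_2)$.

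Next I would read $f^{*}$ off from the matrix $B$ of Lemma~\ref{lem:pA}, obtaining $f^{*}(e^1) = 3e^1 + e^3$, $f^{*}(e^3) = -e^1$, $f^{*}(e^2) = e^1 - e^4$, and $f^{*}(e^4) = e^1 + e^2 + e^3 + 3 e^4$. Thus $\langle e^1, e^3 \rangle$ is $f^{*}$-invariant with matrix $M' = \bigl(\begin{smallmatrix}3 & -1 \\ 1 & 0 \end{smallmatrix}\bigr)$ in the basis $(e^1, e^3)$, while the induced action on the quotient $Q = H^1(S)/\langle e^1, e^3 \rangle$ has matrix $\bigl(\begin{smallmatrix}0 & 1 \\ -1 & 3 \end{smallmatrix}\bigr)$ in the basis $(\bar{e}^2, \bar{e}^4)$. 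Both share the characteristic polynomial $x^2 - 3x + 1$, with roots $\lambda_{\pm} = (3 \pm \sqrt{5})/2$, and the action on $Q$ has determinant $1$, so it is invertible over $\Z$. Writing $\phi_2 = a' e^1 + d' e^4$ and reducing $\phi_1 = (f^n)^{*}(\phi_2)$ modulo $\langle e^1, e^3 \rangle$, the condition $\phi_1 \in \langle e^1, e^3 \rangle$ becomes $d' \cdot (f^n)^{*}(\bar{e}^4) = 0$ in $Q$. Since $(f^n)^{*}$ is invertible on $Q$ and $Q$ is torsion-free, this forces $d' = 0$.

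With $d' = 0$, the class $\phi_2 = a' e^1$ is determined by $a' \in \Z$, so $H^1(M_n) \cong \Z$ with generator corresponding to $a' = 1$. Applying $(M')^n$ to $(1,0)^{T}$ then yields $\phi_n|_{W_1} = L_{n+1} e^1 + L_n e^3$, where $L_n = (\lambda_{+}^{n} - \lambda_{-}^{n})/\sqrt{5}$ satisfies the recurrence $L_{n+1} = 3 L_n - L_{n-1}$ with $L_0 = 0$ and $L_1 = 1$. Thus $a_n = L_{n+1}$ and $c_n = L_n$ are positive integers for $n \geq 1$ and both grow like $\lambda_{+}^{n}/\sqrt{5}$, giving the claimed rate $\lambda = (3+\sqrt{5})/2$. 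The main obstacle is the vanishing argument that forces $d' = 0$: without it one could only bound the rank of $H^1(M_n)$ by $2$, and the exponential growth of $(a_n, c_n)$ relies precisely on the Anosov-type action induced by $f^{*}$ on the complementary quotient $Q$, a property built into the choice of $B$ in Lemma~\ref{lem:pA}.
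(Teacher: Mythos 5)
Your proof is correct and follows essentially the same path as the paper's: both reduce via Mayer--Vietoris to computing $\langle e^1, e^3 \rangle \cap (f^*)^n \langle e^1, e^4 \rangle$, and both rely on $F = f^* = B^t$ preserving $\langle e^1, e^3\rangle$, acting there as the Anosov matrix $\bigl(\begin{smallmatrix}3 & -1 \\ 1 & 0\end{smallmatrix}\bigr)$. The only bookkeeping difference is that you pass to the quotient $Q = H^1(S)/\langle e^1, e^3\rangle$ to force $d' = 0$, whereas the paper uses the unimodularity of $F|_{\langle e^1, e^3\rangle}$ to rewrite the intersection directly as $F^n\langle e^1\rangle$ --- two ways of expressing the same computation.
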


\begin{lemma}\label{lem:geomgrows}
  The manifolds $M_n$ are all hyperbolic with injectivity radius
  bounded uniformly below, and $\vol(M_n) \asymp n$ as $n
  \to \infty$. 
\end{lemma}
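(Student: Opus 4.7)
The plan is to establish hyperbolicity via Thurston's geometrization theorem for Haken manifolds, and then to pin down the geometry of $M_n$ as $n \to \infty$ by comparing it to a model built from the mapping torus $T_f$ of the pseudo-Anosov $f$.

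For hyperbolicity, the crucial ingredient is that $W$ is acylindrical: since $W$ has totally geodesic boundary, $S$ is incompressible in each $W_i$ (hence in $M_n$), and $W_i$ contains no essential annulus with boundary in $\partial W_i$. Thus $M_n$ is Haken. An innermost-disc argument gives irreducibility: any essential embedded sphere can be simplified to lie in some $W_i$, where it bounds a ball because $W_i$ is hyperbolic. Atoroidality follows similarly: an essential torus disjoint from $S$ would lie in some $W_i$, contradicting hyperbolicity of $W_i$; an essential torus meeting $S$ essentially would intersect each $W_i$ in essential annuli with boundary on $\partial W_i$, contradicting acylindricity. As $M_n$ is Haken, irreducible, and atoroidal, Thurston's hyperbolization theorem gives that $M_n$ is hyperbolic.

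For the geometric estimates, I would compare $M_n$ with the $n$-fold cyclic cover $T_f^{(n)}$ of the mapping torus $T_f$; the latter is a closed hyperbolic 3-manifold with $\vol(T_f^{(n)}) = n v_f$, where $v_f = \vol(T_f) > 0$. Both $M_n$ and $T_f^{(n)}$ arise as gluings along the pseudo-Anosov power $f^n$, differing only in what plays the role of the fiber neighborhood. Since the $W_i$ are acylindrical, geometric inflexibility of acylindrical pieces---together with Thurston's convergence theorems for Kleinian groups and the techniques developed in the authors' earlier work \cite{BrockDunfield2015}---produces a geometric limit $M_\infty$ obtained by gluing $W_1$ and $W_2$ to the two ends of the infinite cyclic cover $\widetilde{T}_f$ of $T_f$, to which the $M_n$ converge geometrically, with the middle ``neck'' of $M_n$ modelled on a compact piece of $\widetilde{T}_f$ containing roughly $n$ fundamental domains of the deck $\Z$-action. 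Each fundamental domain contributes $v_f$ to the volume, yielding
\[
\vol(M_n) = n v_f + O(1) \asymp n.
\]
The injectivity radius lower bound falls out of the same convergence: $M_\infty$ is a smooth complete hyperbolic 3-manifold with injectivity radius bounded below on any compact set containing the convex cores of the $W_i$ and a period of $\widetilde{T}_f$, and this bound transfers uniformly to $\inj(M_n)$.

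The main obstacle is making the convergence $M_n \to M_\infty$ quantitative enough to extract both the linear volume asymptotics and the uniform injectivity radius bound. The essential technical inputs are geometric inflexibility for the acylindrical pieces $W_i$ together with the positive translation length of $f$ on Teichmuller space (and on the curve complex of $S$); jointly these prevent short geodesics from developing in the neck region and rule out any $o(n)$ correction to the linear volume growth.
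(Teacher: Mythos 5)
Your overall picture is the right one, and you correctly flag the real issue: everything hinges on making the comparison between $M_n$ and the mapping-torus geometry quantitative. The paper notes in its first sentence that ``a geometric limit argument could be given,'' which is essentially your plan, but then sidesteps it ``for efficiency'' by citing the bi-Lipschitz model theorem of Brock--Minsky--Namazi--Souto \cite{BrockMinskyNamaziSouto2016}. That theorem is exactly the tool that closes the gap you identify: once one verifies (i) the ``large heights'' condition, which holds because the curve-complex distance $d_{\mathcal{C}(\partial W_2)}\big(f^n(\mu_1),\mu_2\big)$ grows linearly in $n$ for a pseudo-Anosov $f$, and (ii) ``$R$-bounded combinatorics'' for the gluing markings, one gets a uniform $K$-bi-Lipschitz diffeomorphism from the model $W_1 \cup \Mtil_f[0,n] \cup W_2$ to $M_n$, and the volume asymptotics and injectivity radius bound then read off directly from the periodicity of $\Mtil_f$. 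The paper verifies (ii) by a strong-convergence argument: the quasi-Fuchsian manifolds $Q(f^n(Y),X)$ converge strongly to a bounded-geometry doubly degenerate manifold by McMullen, and Minsky's Bounded Geometry Theorem then caps the subsurface-projection distances. Your invocations of ``geometric inflexibility'' and ``Thurston's convergence theorems'' gesture at this circle of ideas but do not themselves produce the uniform bi-Lipschitz constant; that is precisely the content you are missing.

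One smaller issue: there is no single geometric limit ``$M_\infty$ obtained by gluing $W_1$ and $W_2$ to the two ends of $\widetilde{T}_f$.'' Pointed geometric limits of $M_n$ depend on the basepoint: based near $W_1$ you limit to $W_1$ with a single degenerate end modeled on $\widetilde{T}_f$ (no $W_2$ present); based near $W_2$ you get the mirror picture; based deep in the middle you get $\widetilde{T}_f$ itself. Gluing both $W_i$ onto $\widetilde{T}_f$ would produce something homeomorphic to $M_0$, which is not a limit of the $M_n$. This does not wreck your strategy, but the statement as written is incorrect and would need to be replaced by a basepoint-dependent family of limits, or, better, by the finite bi-Lipschitz model as in the paper. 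Your hyperbolicity argument via Haken $+$ irreducible $+$ atoroidal $+$ Thurston hyperbolization is correct and standard; the paper leaves this implicit, as it is subsumed by the model theorem.
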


\begin{lemma}\label{lem:normsplitting}
  Suppose $M^3$ is a closed irreducible \3-manifold.  Suppose
  $F \subset M$ is an incompressible surface dividing $M$ into
  submanifolds $A$ and $B$.  For all $\phi \in H^1(M)$ we have
  \[
  \thnorm{\phi} \geq \thnorm{\phi_A} + \thnorm{\phi_B}
  \]
  where $\phi_A$ and $\phi_B$ are the images of $\phi$ in $H^1(A)$
  and $H^1(B)$ respectively. 
\end{lemma}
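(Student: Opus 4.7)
The plan is to take a Thurston-norm minimizing surface $S \subset M$ dual to $\phi$, cut it along $F$, and check that the two pieces witness $\thnorm{\phi_A}$ and $\thnorm{\phi_B}$ without wasting any Euler characteristic on inessential circles.

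First I would invoke standard Thurston norm theory \cite{Thurston1986} to pick a closed oriented embedded $S \subset M$ dual to $\phi$ with $\eulerminus{S} = \thnorm{\phi}$ and no sphere components. The irreducibility of $M$ together with the minimality of $\eulerminus{S}$ forces $S$ to be incompressible: any compressing disk would yield a surface still dual to $\phi$ with strictly smaller $\eulerminus{\cdotspaced}$ (after discarding any resulting sphere component, which bounds a ball since $M$ is irreducible).

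Next I would make $S$ transverse to $F$ and remove inessential circles of $S \cap F$ by the usual innermost-disk routine. An innermost circle on $F$ bounding a disk in $F$ must, by incompressibility of $S$, bound a disk in $S$ as well; the resulting embedded sphere bounds a ball by irreducibility of $M$, allowing an isotopy that strictly reduces $|S \cap F|$. Symmetrically, any disk in $S$ bounded by an intersection circle can be swept away using incompressibility of $F$ combined with irreducibility of $M$. After finitely many such moves every circle of $S \cap F$ is essential in both surfaces, so $S_A := S \cap A$ and $S_B := S \cap B$ are properly embedded with essential boundary; in particular no component of either is a disk. Combined with the absence of sphere components of $S$, every component of $S_A$ and $S_B$ has non-positive Euler characteristic, and additivity of $\chi$ across the circles $S \cap F$ (which contribute zero) yields
\[
\thnorm{\phi} \ = \ \eulerminus{S} \ = \ \eulerminus{S_A} + \eulerminus{S_B}.
\]

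To finish, I would use Poincar\'e--Lefschetz duality to identify $S_A$ with a representative of the class $\phi_A \in H^1(A) \cong H_2(A, \partial A)$, which gives $\eulerminus{S_A} \geq \thnorm{\phi_A}$, and likewise for $B$; combining with the displayed equality proves the lemma. The main obstacle is the intersection-cleanup step: one must check that each innermost-disk isotopy is in fact available and preserves the dual homology class of $S$, which is exactly where all three hypotheses (incompressibility of $F$, incompressibility of the minimizer $S$, and irreducibility of $M$) get used simultaneously.
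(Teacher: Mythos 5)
Your proposal is correct and takes essentially the same approach as the paper: choose a norm-minimizing surface $S$ dual to $\phi$, isotope away inessential circles of $S \cap F$ using incompressibility of $F$ and $S$ together with irreducibility of $M$, and then observe that $S \cap A$ and $S \cap B$ are dual to $\phi_A$ and $\phi_B$ and have no disk or sphere components, so additivity of $\chi$ finishes the argument. The only cosmetic difference is that the paper simply invokes the existence of a taut (hence incompressible) norm-minimizing surface, whereas you re-derive incompressibility of a minimizer from irreducibility of $M$; both are standard.
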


We first prove Theorem~\ref{thm:largenorm} assuming the
lemmas, and then establish each of them in turn.

\begin{proof}[Proof of Theorem~\ref{thm:largenorm}]
  By Lemma~\ref{lem:normsplitting}, if $\phibar_n$ denotes the
  restriction of $\phi_n$ to $H^1(W_1)$, we have 
  \[
  \thnorm{\phi_n} \geq \thnorm{\phibar_n}
  \]
  Since $W_1$ is hyperbolic with totally geodesic boundary, it is
  atoroidal and acylindrical and hence the Thurston norm on $H^1(W_1)$
  is nondegenerate.  As any two norms on a finite-dimensional
  vector space are uniformly comparable,
  Lemma~\ref{lem:alggengrows} gives that
  $\thnorm{\phibar_n} \asymp \lambda^n$ as $n \to \infty$.  Since
  $\vol(M_n) \asymp n$ by Lemma~\ref{lem:geomgrows}, we have that
  $\thnorm{\phi_n}$ grows exponentially in $\vol(M_n)$ as required.
\end{proof}

\begin{remark}
  In fact, working a little harder one can make the rate of
  exponential growth explicit, namely
  \begin{equation}\label{eq:explicit}
    \log{\thnorm{\phi_n}}  >  \ 0.348 \cdot \vol(M_n) \mtext{for large $n$.}
  \end{equation}
  Specifically, take $f$ to be the map constructed in the second proof
  of Lemma~\ref{lem:pA}.  If we use \cite{Tian1990} in the manner of
  \cite[Chapter 12]{Namazi2005} to get a refined version of the model
  for $M_n$, it follows that $\vol(M_n)/n$ limits to
  $\vol(M_f) \approx 7.51768989$.  Combining with the explicit formula
  for $\lambda$ in Lemma~\ref{lem:alggengrows} gives
  (\ref{eq:explicit}).
\end{remark}

\begin{proof}[Proof of Lemma~\ref{lem:alggengrows}]
First, we show that $H^1(M_n) = \Z$ for all $n$, and moreover identify
the generator $\phi_n$ in terms of the basis $e^i$ for $H^1(S)$.  (The
stronger claim $H_1(M_n) \cong \Z$ is also true, but we have no need
for this here.)  Let
\[
  F = f^* = B^t = \left(\begin{array}{rrrr}
                          3 & 1 & -1 & 1 \\
                          0 & 0 & 0 & 1 \\
                          1 & 0 & 0 & 1 \\
                          0 & -1 & 0 & 3
                        \end{array}\right)
\]
By Mayer-Vietoris, we have 
\begin{equation}\label{eq:MV}
\begin{split}
  H^1(M_n) &= \image\big(H^1(W_1) \to H^1(S)\big) \cap  (f^*)^n \Big(
  \image\big(H^1(W_2) \to H^1(S)\big)\Big) \\
  &= \pair{e^1, e^3} \cap \pair{ F^n(e^1), F^n(e^4)}
\end{split}
\end{equation}
Notice that $F$ preserves the subspace $\pair{e^1, e^3}$ and acts
there by the matrix 
\[
\Fbar = \left(\begin{array}{rr}
3 & -1 \\
1 & 0
\end{array}\right)
\]
which is an Anosov matrix in $\SL{2}{\Z}$. For $n = 0$, the
intersection in (\ref{eq:MV}) is
\[
\pair{e^1, e^3} \cap \pair{e^1, e^4} = \pair{e^1}.
\]
Hence, for general $n$ the intersection is spanned by $F^n(e^1)$ which is $a_n e^1
+ c_n e^3$ where 
\[
\Fbar^n = \left(\begin{array}{rr}
a_n & b_n \\
c_n & d_n
\end{array}\right)
\]
Thus $H^1(M_n) = \Z$ as claimed, with generator $\phi_n$ restricting
to $H^1(W_1)$ as $a_n e^1 + c_n e^3$, where $a_n$ and $c_n$ grow
exponentially in $n$, specifically at a rate
$\lambda = \frac{3 + \sqrt{5}}{2} \approx 2.618034$.
\end{proof}

\begin{proof}[Proof of Lemma~\ref{lem:geomgrows}]
  Though a geometric limit argument could be given to verify the
  injectivity radius and volume claims, we refer for efficiency to the
  main result of \cite{BrockMinskyNamaziSouto2016} for a bi-Lipschitz
  model for the manifolds $M_n$. Following the terminology of
  \cite{BrockMinskyNamaziSouto2016}, the \emph{decorated manifolds}
  $\cM$ are the pair of acylindrical manifolds $W_1$ and $W_2$ with
  totally geodesic boundary, the \emph{decorations} $\mu_1$ and
  $\mu_2$ on the boundaries $\bdry W_1$ and $\bdry W_2$ can be taken
  to be bounded length markings in the induced hyperbolic structure on
  each boundary component, and the gluing map is $f^n$ as above.  The
  \emph{large heights} condition in Theorem~8.1 of
  \cite{BrockMinskyNamaziSouto2016} is evidently satisfied since the
  curve complex distance
  \[
  d_{\cC(\bdry W_2)}\big(f^n(\mu_1),\mu_2\big)
  \]
  grows linearly with $n$ and likewise for
  $\big(\mu_1, f^n(\mu_2)\big)$ in $\cC(\bdry W_1)$.  Furthermore, the
  pair $\big(f^n(\mu_1) , \mu_2\big)$ has \emph{$R$-bounded
    combinatorics}, where $R$ is independent of $n$, for the following
  reason. Picking points $X$ and $Y$ in the Teichm\"uller space of $S$
  where $\mu_1$ and $\mu_2$ have bounded length, the quasi-fuchsian
  manifolds $Q\big(f^n(Y),X\big)$ converge strongly to a manifold $N$
  with injectivity radius bounded below by
  \cite[Corollary~3.13]{McMullen1996}.  Thus there is a uniform lower
  bound on the injectivity radii of the approximates $Q(f^n(Y),X)$.
  Applying the Bounded Geometry Theorem \cite{Minsky2001} to the
  $Q(f^n(Y), X)$ gives the needed uniform upper bound on the distance
  between $f^n(\mu_1)$ and $\mu_2$ in any subsurface projection.

  The \emph{model manifold} for the $(\cM,R)$-gluing $X_n$ determined
  by these data is as follows.  Let $\Mtil_f$ be the fiber cover of
  the mapping torus $M_f$ and let $\Mtil_f[0,1]$ be a fundamental
  domain for the action of $f$ as an isometric covering translation
  $\alpha_f: \Mtil_f \to \Mtil_f$ bounded by a choice of fiber and its
  translate by $\alpha_f$.  Defining $\Mtil_f[k,k+1]$ to be
  $\alpha_f^k (\Mtil_f[0,1])$, we use $\Mtil_f [0,n]$ to denote the
  union of $n$ successive such fundamental domains.  Then the model
  manifold $\mathbb{M}_{X_n}$ is the gluing of $W_1$ and $W_2$ along
  their boundary to $\Mtil_f [0,n]$ in the manner described in
  \cite[\S 2.15]{BrockMinskyNamaziSouto2016}.  Given that $\Mtil_f$ is
  periodic, we know that $\inj(\mathbb{M}_{X_n})$ is bounded below
  independent of $n$ and that
  $\vol(\mathbb{M}_{X_n}) \sim \vol(M_f) \cdot n$ as $n \to \infty$.

  Now Theorem~8.1 of \cite{BrockMinskyNamaziSouto2016} gives a $K$ so
  that for all large $n$ there is a $K$--bi-Lipschitz diffeomorphism
  \[
  f_{X_n} \colon \mathbb{M}_{X_n} \to M_n
  \]
  Combined with the above facts about the geometry of 
  $\mathbb{M}_{X_n}$, this gives the claimed properties for $M_n$ and
  so proves the lemma.
\end{proof}

\begin{proof}[Proof of Lemma~\ref{lem:normsplitting}]
  Let $S \subset M$ be a surface dual to $\phi$ which is \emph{taut},
  that is, the surface $S$ realizes $\thnorm{\phi}$, is
  incompressible, and no union of components of $S$ is separating.  As
  $F$ and $S$ are incompressible, we can isotope $S$ so that
  $F \cap S$ consists of curves that are essential in both $S$ and
  $F$; in particular, every component of $S \setminus F$ has
  non-positive Euler characteristic.  As $S \cap A$ and $S \cap B$ are
  dual to $\phi_A$ and $\phi_B$ respectively, we have
  \[
  \thnorm{\phi} = -\chi(S) = -\chi(S \cap A) - \chi(S \cap B) =
  \eulerminus{S \cap A} + \eulerminus{S \cap B} \geq \thnorm{\phi_A}
  + \thnorm{\phi_B}
  \]
  as desired. 
\end{proof}

{\RaggedRight 
\bibliographystyle{math} 
\bibliography{big-IHS}
}

\end{document}